\newtheorem{theorem}{Theorem}[section]
\newtheorem{lemma}[theorem]{Lemma}
\newtheorem*{claim}{Claim}
\newtheorem{proposition}[theorem]{Proposition}
\theoremstyle{definition}
\newtheorem{remark}[theorem]{Remark}
\newtheorem{definition}[theorem]{Definition}
\newtheorem{question}[theorem]{Open question}
\newcommand{\sy}{\mathrm{SympEmb}}
\title{Noncontractible loops of symplectic embeddings between convex toric domains}
\author{Mihai Munteanu}
\begin{document}
\maketitle

\begin{abstract}
\noindent Given two $4$--dimensional ellipsoids whose symplectic sizes satisfy a specified inequality, we prove that a certain loop of symplectic embeddings between the two ellipsoids is noncontractible. The statement about symplectic ellipsoids is a particular case of a more general result. Given two convex toric domains whose first and second ECH capacities satisfy a specified inequality, we prove that a certain loop of symplectic embeddings between the two convex toric domains is noncontractible. We show how the constructed loops become contractible if the target domain becomes large enough. The proof involves studying certain moduli spaces of holomorphic cylinders in families of symplectic cobordisms arising from families of symplectic embeddings.
\end{abstract}

\section{Introduction}
\subsection{Previous results and a new result about ellipsoids}
Questions about symplectic embeddings of one symplectic manifold into another have always been one of the main study directions in symplectic geometry. The pioneering work of Gromov in \cite{gromov1985pseudo} introduced new methods that made it possible to answer many open questions about symplectic embeddings that had been until then unanswered. The survey by Schlenk, \cite{schlenk2018symplectic}, presents in detail the type of results one can prove about symplectic embeddings together with the tools used to prove such results. 

Most of the questions that have been answered (in the positive or the negative) concern the existence of symplectic embeddings of one symplectic manifold into another. For example, see \cite{mcduff1991blow},  \cite{mcduff2009symplectic}, \cite{mcduff1994symplectic}, and \cite{mcduff2012embedding} for symplectic embeddings involving $4$--dimensional ellipsoids, see \cite{choi2014symplectic}, \cite{cristofaro2014symplectic}, \cite{cristofaro2017symplectic}, and \cite{hutchings2016beyond} for symplectic embeddings involving more general $4$--dimensional symplectic manifolds, and also see \cite{hind2013ellipsoid}, \cite{guth2008symplectic}, and \cite{gutt2017symplectic} for results in higher dimensions. 

Another direction where significant progress has been made is the study of the connectivity of certain spaces of symplectic embeddings. In \cite{mcduff2009symplectic}, McDuff shows the connectivity of spaces of symplectic embeddings between $4$--dimensional ellipsoids, while in \cite{cristofaro2014symplectic}, Cristofaro--Gardiner extends this result to symplectic embeddings from concave toric domains to convex toric domains, both of which are subdomains of $\mathbb{R}^4$ whose definition we recall below in \S \ref{subsection:echcapacities}. In \cite{hind2013symplectic}, Hind proves the non-triviality of $\pi_0$ for spaces of symplectic embeddings involving certain $4$--dimensional polydisks, extending a result that was initially proved in \cite{floer1994applications}. In \cite{gutt2017symplectically}, the authors prove that certain spaces of symplectic embeddings involving more general $4$--dimensional symplectic manifolds are disconnected, while in \cite{mcduff1993camel}, the authors study the connectivity of symplectic embeddings into generalized ``camel" spaces in higher dimensions, extending results in \cite{eliashberg1991convex}.

Following yet another direction, in this paper we study the fundamental group of certain spaces of symplectic embeddings in $4$ dimensions. Let us first clarify the notation we will be using.  For real numbers $a$ and $b$ with  $0<a\leq b$, the set
\[ E(a,b):=\left\lbrace (z_1,z_2)\in\mathbb{C}^2\; \middle| \; \frac{\pi|z_1|^2}{a}+ \frac{\pi|z_2|^2}{b}\leq 1\right\rbrace \]
together with the restriction of the standard symplectic form from $\mathbb{R}^4$ is called a \textit{closed symplectic ellipsoid}, or more simply an \textit{ellipsoid}. Moreover, we define the symplectic ball $B^4(a):=E(a,a)$. Also, if $M$ and $N$ are symplectic manifolds, let $\sy(M,N)$ denote the space of symplectic embeddings of $M$ into $N$.

Here are a few results about the fundamental group of spaces of symplectic embeddings that motivated our work. The first result in this direction is an immediate consequence of the methods that Gromov introduced in \cite{gromov1985pseudo} in order to prove the nonsqueezing theorem.

\begin{theorem}[\cite{eliashberg1991convex}] Let $S$ be an embedded unknotted $2$--sphere in $(\mathbb{R}^4,\omega_{\mathrm{std}})$. Write $X_S=\mathbb{R}^4\setminus S$ and let $e:\sy(B^4(r), X_S)\to X_S$ be the evaluation map $f\mapsto f(0)$. Then the induced homomorphism $e_{*}:\pi_1(\sy(B^4(r), X_S))\to \pi_1(X_S)$ is surjective for $2\pi r^2 <\int_{S} \omega$ and trivial otherwise.
\end{theorem}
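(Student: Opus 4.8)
The plan is to recast the statement as the computation of a single linking number and then to treat the two regimes separately: the surjective direction by an explicit construction, and the trivial direction by a pseudoholomorphic curve argument in the spirit of Gromov's nonsqueezing theorem, carried out in a family over the circle. Since $S$ is an unknotted $2$--sphere in $\mathbb R^4$, its complement $X_S$ has $\pi_1(X_S)\cong\mathbb Z$, generated by the class $\mu$ of a small meridian loop, that is, the boundary of a disk meeting $S$ transversally in a single point; and for any loop $\{f_t\}_{t\in S^1}$ of symplectic embeddings (so $f_0=f_1$) the image $e_*[\{f_t\}]$ is the class of $\gamma(t)=f_t(0)$, which is detected by the integer linking number $\operatorname{lk}(\gamma,S)$, namely the algebraic intersection number of $\gamma$ with any $3$--chain $B$ in $\mathbb R^4$ with $\partial B=S$. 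Replacing $f_t(0)$ by $f_t(v)$ for a fixed $v\in B^4(r)$ changes nothing, since $s\mapsto f_t(sv)$ is a homotopy of loops, so $e_*[\{f_t\}]$ depends only on the loop of embedded balls $\{f_t(B^4(r))\}$. The theorem therefore says: a loop of embedded balls whose core links $S$ once exists precisely when $2\pi r^2<\int_S\omega$, and otherwise every such loop has core with zero linking; because $\pi_1(X_S)$ is cyclic and $e_*$ is a homomorphism, these two alternatives are exhaustive once each is established in its own range of $r$.

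For the surjective direction, assume $2\pi r^2<\int_S\omega$ and build the loop by hand. A symplectic tubular neighbourhood of $S$ provides, around a suitably chosen meridian, a region that is ``wide enough'' — and this is exactly the content of the inequality $2\pi r^2<\int_S\omega$ — to contain a round symplectic ball of the given capacity based at each point of the meridian; transporting such a ball once around the meridian and arranging the two ends to coincide gives a loop in $\sy(B^4(r),X_S)$ whose core is $\mu$, so $e_*$ is onto. The only point requiring care is to close the family up into an honest loop while keeping every ball inside $X_S$, and this is where the strict inequality enters.

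The trivial direction — if $2\pi r^2\ge\int_S\omega$ then every loop of embedded balls has core with zero linking — is the heart of the matter, and the step I expect to be the main obstacle. I would argue by contradiction, promoting Gromov's nonsqueezing argument to a family over $S^1$. Suppose $\operatorname{lk}(\gamma,S)=k\neq 0$. After a symplectic compactification — for instance viewing $\mathbb R^4$ inside $\mathbb C P^2$, or attaching a cylindrical end — so that holomorphic curves carry bounded energy, choose a loop $t\mapsto J_t$ of $\omega$--compatible almost complex structures for which each $f_t(B^4(r))$ is a Gromov--standard holomorphic ball, and stretch the neck along the family of hypersurfaces $\partial\!\left(f_t(B^4(r))\right)$. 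Gromov compactness for this family produces, for every $t$ and coherently in $t$, a holomorphic building one of whose components passes through $f_t(0)$ and lies over the ball $f_t(B^4(r))$ — hence of symplectic area bounded below in terms of $r$ by the monotonicity lemma — while the fact that the family of moduli spaces over $S^1$ inherits the winding of the loop forces the building to meet $S$ with total intersection number $k$. A careful accounting of the energies involved — the lower bound at $f_t(0)$ from monotonicity, the energy cost of realizing the intersections with $S$, and the fixed ambient energy budget — then contradicts $2\pi r^2\ge\int_S\omega$.

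The genuinely hard points here are the usual ones for this type of argument, and they are precisely the technical core that the remainder of the paper develops: choosing $J_t$ so that transversality holds for the relevant moduli spaces while the intersections with $S$ — which, not being a symplectic submanifold, cannot be made $J_t$--holomorphic — remain under control and must be bookkept homologically; proving compactness of the family of moduli spaces, so that the limiting building varies continuously with $t\in S^1$; and excluding the possibility that the winding of the family around $S$ is absorbed by bubbling into the balls rather than being carried by a component that genuinely meets $S$. Handling these is exactly what the study of moduli spaces of holomorphic cylinders in families of symplectic cobordisms arising from families of symplectic embeddings is meant to accomplish.
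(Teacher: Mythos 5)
This theorem is quoted from \cite{eliashberg1991convex} as background motivation; the paper does not prove it, so there is no in--paper proof to compare your proposal against. Assessing the proposal on its own, the framing is reasonable --- $\pi_1(X_S)\cong\mathbb{Z}$ for unknotted $S$, $e_*$ detected by the linking number $\operatorname{lk}(\gamma,S)$, and the split into an explicit construction for surjectivity and an obstruction for triviality are all in the right spirit --- but the crucial step in the ``trivial otherwise'' direction has a genuine gap.

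You want the winding of the $S^1$--family of moduli spaces to ``force the building to meet $S$ with total intersection number $k$,'' and then to derive a contradiction from the energy cost of those intersections. Neither step works. The sphere $S$ is only smoothly embedded, not $J_t$--holomorphic, so positivity of intersections is unavailable, and nothing makes the curves through $f_t(0)$ meet $S$ at all: $\operatorname{lk}(\gamma,S)=k$ constrains only the loop of evaluation points, not the curves passing through them. Homologically, after compactifying $\mathbb{R}^4$ into (say) $\mathbb{C}P^2$, the class $[S]\in H_2$ vanishes because $S$ is already null--homologous in $\mathbb{R}^4$, so the algebraic intersection of any closed holomorphic curve with $S$ is zero; intersections, if any, come in canceling pairs and are not forced by the winding of the family. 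And ``the energy cost of realizing the intersections with $S$'' is not a defined quantity: crossing a non--holomorphic surface does not consume symplectic area. The intended contradiction therefore does not materialize, and nothing in the proposal accounts for the factor of $2$ in $2\pi r^2$, which is precisely the content of the threshold.

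There is also a red flag in the statement that your proposal passes over without comment: $\omega_{\mathrm{std}}$ is exact on $\mathbb{R}^4$, so by Stokes $\int_S\omega=0$ for every closed oriented surface $S\subset\mathbb{R}^4$. Taken at face value the condition $2\pi r^2<\int_S\omega$ is never satisfied, and the theorem would collapse to the assertion that $e_*$ is always trivial, making your surjectivity construction vacuous. The relevant quantity in \cite{eliashberg1991convex} must be something other than the literal $\int_S\omega$ (the symplectic size of a spanning membrane or of the hole that $S$ encircles), and a credible proof has to begin by identifying it, since both halves of the argument are meant to pivot on that number. Finally, the closing claim that the remainder of the paper's machinery --- moduli spaces of holomorphic cylinders in cobordisms between convex toric domains --- supplies exactly the missing technical core is a misattribution: those techniques track Reeb asymptotics and ECH indices between contact--type boundaries and are used to prove Theorem \ref{theorem:maintheorem}, not the cited result; they do not engage at all with the linking of an evaluation loop around an embedded non--holomorphic sphere.
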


Another situation where the fundamental group of a space of symplectic embeddings can be computed is the following.

\begin{theorem}[\cite{hind2013symplectormophism}] If $\epsilon<1$ the space $\sy( B^4(\epsilon), B^4(1))$ deformation retracts to $U(2)$.
\end{theorem}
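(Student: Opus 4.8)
The plan is to realise $\sy(B^4(\epsilon),B^4(1))$ as a Serre fibration over a space of unparametrised embedded balls, whose fiber is the symplectomorphism group of the small ball, and then to compute both the base and the fiber. One may work with open balls throughout: a standard rescaling-and-exhaustion argument shows that $\sy(B^4(\epsilon),B^4(1))$ has the homotopy type of $\sy(\mathring B^4(c),\mathring B^4(1))$, which is moreover independent of $c\in(0,1)$. Let $\mathcal I$ be the space of images $f(\mathring B^4(c))\subset\mathring B^4(1)$ and let $\iota$ be the forgetful map $f\mapsto f(\mathring B^4(c))$; any two parametrisations of the same embedded ball differ by an element of $\mathrm{Symp}(\mathring B^4(c))$, and the symplectic isotopy extension theorem supplies local sections, so $\iota$ is a Serre fibration with fiber $\mathrm{Symp}(\mathring B^4(c))$. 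By Gromov's methods this fiber deformation retracts onto the linear subgroup $U(2)$: the maps $\phi\mapsto\phi(0)$ and $\phi\mapsto d\phi_0$ fiber it over $\mathring B^4(c)\times\mathrm{Sp}(4)\simeq U(2)$ with fiber $\{\phi:\phi(0)=0,\,d\phi_0=\mathrm{id}\}$, which is contractible by the ball analogue of Gromov's theorem that $\mathrm{Symp}_c(\mathbb R^4)$ is contractible. Hence it suffices to show that $\mathcal I$ is weakly contractible, for then the fibration sequence of $\iota$ yields $\sy(B^4(\epsilon),B^4(1))\simeq U(2)$.

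For this I would identify $\mathring B^4(1)$ with the complement $\mathbb{C}P^2\setminus L$ of a complex line in $(\mathbb{C}P^2,\omega_{FS})$, normalising $\omega_{FS}$ so that $\int_L\omega_{FS}=1$; an element of $\mathcal I$ is then a symplectically embedded ball of capacity $c<1$ in $\mathbb{C}P^2$ disjoint from $L$. Blowing it up produces the rational surface $X=\mathbb{C}P^2\#\overline{\mathbb{C}P^2}$ with a symplectic form in the class $[\omega_{FS}]-c\,\mathrm{PD}(E)$, in which the exceptional sphere $E$ and the proper transform of $L$ (class $H$) are disjoint embedded symplectic spheres with $E^2=-1$ and $H^2=+1$; conversely McDuff's correspondence between spaces of symplectic ball embeddings and spaces of such blow-up data recovers the homotopy type of $\mathcal I$ from this picture. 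Now $\mathrm{Symp}(\mathbb{C}P^2,L)$ acts on $\mathcal I$, and Gromov's analysis of $J$-holomorphic spheres in rational surfaces---the spaces of compatible $J$ making $L$, respectively $E$, holomorphic are contractible, $E$ has a unique embedded $J$-holomorphic representative for each such $J$, and automatic transversality applies since the relevant classes have self-intersection $\geq-1$---combined with McDuff's inflation technique shows that this action is transitive up to homotopy, with point-stabiliser the group $\mathrm{Symp}(X,L\sqcup E)$ of symplectomorphisms fixing both spheres. Blowing down $E$ identifies this stabiliser with $\mathrm{Symp}(\mathbb{C}P^2,L,p)$ for a point $p\notin L$, and since $\mathrm{Symp}(\mathbb{C}P^2,L)$ acts transitively on $\mathbb{C}P^2\setminus L$ (already by compactly supported symplectomorphisms) the inclusion of the stabiliser into $\mathrm{Symp}(\mathbb{C}P^2,L)$ is a weak homotopy equivalence. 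The fibration $\mathrm{Symp}(X,L\sqcup E)\to\mathrm{Symp}(\mathbb{C}P^2,L)\to\mathcal I$ then forces $\mathcal I$ to be weakly contractible.

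Putting the two computations together, $\sy(B^4(\epsilon),B^4(1))$ fibers over a weakly contractible base with fiber deformation retracting onto $U(2)$, hence is weakly homotopy equivalent to $U(2)$; since the spaces involved have the homotopy type of CW complexes and the equivalence is realised by the inclusion of $U(2)$ as linear symplectomorphisms (a submanifold), one promotes it to an honest deformation retraction by standard point-set arguments. I expect the essential difficulty to lie entirely in the step proving $\mathcal I$ weakly contractible: one must control every Gromov degeneration of the holomorphic spheres in classes $H$ and $E$ as $J$ ranges over compact families of almost complex structures, and then convert the resulting uniqueness and transversality statements into the required deformation retraction by a parametric inflation construction, all while keeping track of the line $L$ by means of a relative version of Gromov's argument. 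By contrast the fibration $\iota$, Gromov's computation of $\mathrm{Symp}(\mathring B^4(c))$, and the passage from a weak equivalence to a deformation retraction are either standard or purely formal. (An alternative route, closer to the methods of the present paper, would complete $B^4(1)\setminus f(B^4(\epsilon))$ to a symplectic cobordism and directly study families of moduli spaces of holomorphic cylinders with prescribed Reeb-orbit asymptotics over $\sy(B^4(\epsilon),B^4(1))$.)
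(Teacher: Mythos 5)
The paper does not prove this theorem: it is quoted directly from Hind's paper \cite{hind2013symplectormophism} as motivational background, so there is no in-paper argument to compare your proposal against, and it has to be judged on its own.

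Your overall skeleton---fiber $\sy(B^4(\epsilon),B^4(1))$ over the space of unparametrised images $\mathcal I$ with fiber $\mathrm{Symp}(\mathring B^4(c))$, reduce the fiber to $U(2)$, and show $\mathcal I$ weakly contractible by blowing up to $\mathbb{CP}^2\#\overline{\mathbb{CP}^2}$ and invoking Gromov's $J$--holomorphic sphere analysis together with McDuff's uniqueness and inflation theory---is the kind of argument that appears throughout the ball-embedding literature and is a sensible plan. However, two steps hide most of the real difficulty and are not discharged by what you cite. First, the fiber computation: you reduce $\mathrm{Symp}(\mathring B^4(c))\simeq U(2)$ to the contractibility of $\{\phi:\phi(0)=0,\ d\phi_0=\mathrm{id}\}$, but this is not the compactly supported group, and $\mathring B^4(c)$ is not symplectomorphic to $\mathbb R^4$ (finite versus infinite volume), so Gromov's theorem that $\mathrm{Symp}_c(\mathbb R^4)$ is contractible does not apply directly. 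The symplectic Alexander trick also fails here, since $z\mapsto t^{-1}\phi(tz)$ need not map the open ball into itself. One therefore has to run a genuine holomorphic-curve argument for the open ball, and at that point this step is of comparable depth to the theorem itself rather than ``standard.'' Second, the stabiliser identification: a symplectomorphism of $X$ preserving $E$ and $L$ blows down to a symplectomorphism of $\mathbb{CP}^2$ that preserves $L$ and preserves the embedded ball that was blown up, which is a strictly stronger condition than fixing its centre $p$; the passage from $\mathrm{Symp}(X,L\sqcup E)$ to $\mathrm{Symp}(\mathbb{CP}^2,L,p)$ silently discards a germ factor at $p$ and needs a separate argument that this factor is homotopically negligible. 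Neither gap is an indication that the strategy is wrong, but as written both points where the actual analytic work happens are asserted rather than argued.
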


A more recent result that is closer in spirit to the results of this paper can be found in \cite{burkard2016first}, where the author constructs a loop $\{\phi_{\mu}\}_{\mu\in[0,1]}$ in $\sy(E(a,b)\sqcup E(a,b),B^4(R))$ and shows that if the positive real numbers $a$, $b$, and $R$ satisfy $\frac{a}{b} \notin \mathbb{Q}$, $2a<R<a+b$, and $b<2a$, then the constructed loop is noncontractible in  $\sy(E(a,b)\sqcup E(a,b), B^4(R))$.  Moreover, the loop becomes contractible if $R>a+b$. 

By contrast to \cite{burkard2016first}, we study symplectic embeddings whose domain is connected. More specifically, this paper is concerned with the study of restrictions of the loop of symplectic linear maps defined in (\ref{equation:loop}) below to certain domains in $\mathbb{R}^4$.

\begin{definition}
Let  $\lbrace \Phi_t \rbrace_{t\in[0,1]}\subset\mathrm{Sp}(4,\mathbb{R})$ denote the loop of symplectic linear maps
\begin{equation}
\label{equation:loop}
 \Phi_t(z_1,z_2) =
\begin{cases}
(e^{4\pi i t}z_1,z_2), & t\in\left[0,\frac{1}{2}\right] \\
(z_1,e^{-4\pi i t}z_2), & t\in\left[\frac{1}{2},1\right].
\end{cases}
\end{equation}
\end{definition}
The loop $\Phi_t$ is a concatenation of the $2\pi$ counterclockwise rotation in the $z_1$--plane followed by the $2\pi$ clockwise rotation in the $z_2$--plane. The loop $\{\Phi_t\}_{t\in[0,1]}$ is contractible in $\mathrm{Sp}(4,\mathbb{R})$, but it restricts to give some noncontractible loops of symplectic embeddings. For example:

\begin{theorem}
\label{theorem:ellipsoids}
Assume that $a<c<b<d$ and $c<2a$. Then, for $\Phi_t$ defined as in (\ref{equation:loop}), the loop of symplectic embeddings $\{\varphi_t=\Phi_t|_{E(a,b)}\}_{t\in[0,1]}$ is noncontractible in $\sy(E(a,b),E(c,d))$.
\end{theorem}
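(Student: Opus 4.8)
The plan is to argue by contradiction with a moduli space of holomorphic cylinders carried by a family of symplectic cobordisms built from the loop. After perturbing the contact forms on the ellipsoid boundaries if necessary (this does not change the homotopy type of the embedding spaces), assume $\partial E(a,b)$ and $\partial E(c,d)$ are nondegenerate, with simple Reeb orbits $\gamma_1,\gamma_2$ of actions $a,b$ and $\delta_1,\delta_2$ of actions $c,d$. Since each $\Phi_t$ is a linear symplectomorphism preserving the round ellipsoid $E(a,b)$, we have $\varphi_t(E(a,b))=E(a,b)\subset E(c,d)$ for all $t$ (using $a<c$, $b<d$). More generally, to any loop $\{\psi_t\}_{t\in S^1}$ in $\sy(E(a,b),E(c,d))$ I would attach the family of symplectic cobordisms $W_t:=E(c,d)\setminus\psi_t(\operatorname{int}E(a,b))$, with convex boundary $\partial E(c,d)$ and concave boundary $\psi_t(\partial E(a,b))$, their symplectic completions $\overline W_t$, a generic family $\{J_t\}$ of compatible almost complex structures cylindrical on the ends, and the moduli space $\mathcal M_t$ of $J_t$--holomorphic cylinders in $\overline W_t$ with positive puncture asymptotic to $\delta_1$ and negative puncture asymptotic to $\gamma_1$. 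Such a cylinder has symplectic area $c-a>0$ and, so that $\mathcal M_t$ is $0$--dimensional, Fredholm index $2$ (matching the $2$--dimensional reparametrization group of a cylinder). Each $u\in\mathcal M_t$ carries a well-defined \emph{twist} $\tau(u)\in\mathbb R/\mathbb Z$, the difference between the asymptotic markers of $u$ at its negative and its positive end; once parametrizations of $\gamma_1$ and $\delta_1$ are fixed this is independent of how $u$ is parametrized.

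The hypotheses $a<c$, $c<2a$, $c<b$ --- equivalently: the first ECH capacities obey $a<c$, and the first ECH capacity $c$ of the target is less than the second ECH capacity $\min(2a,b)$ of the domain --- are exactly what give compactness of these moduli spaces in families. By SFT compactness a limit of cylinders in $\bigsqcup_t\mathcal M_t$ is a holomorphic building whose internal asymptotic orbits would have action strictly between $a$ and $c$; but the Reeb orbits of $\partial E(a,b)$ have actions in the list $a,b,2a,\dots$ and those of $\partial E(c,d)$ in the list $c,d,2c,\dots$, none of which meets the interval $(a,c)$ under our hypotheses (and the same remains true for $\psi_t(\partial E(a,b))$, since the action spectrum of the shortest orbits is detected by the symplectomorphism-invariant ECH capacities); closed bubbles are excluded because $\overline W_t$ is exact, and multiple covers cannot appear because $\delta_1,\gamma_1$ are simple. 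Hence for generic data $\bigsqcup_{t\in S^1}\mathcal M_t$ is a compact $1$--manifold, $\tau$ extends continuously over it, and its degree $\rho(\{\psi_t\})\in\mathbb Z/2$ (or $\mathbb Z$, after fixing coherent orientations) is --- by the usual cobordism argument applied to generic homotopies of loops and generic paths of almost complex structures --- an invariant of the homotopy class of the loop. (In the general convex-toric theorem the first and second ECH capacities play the same bookkeeping role for the low-action Reeb orbits.)

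Next I would compute $\rho(\{\varphi_t\})$. The standard contact form and Reeb flow on $\partial E(a,b)$ are invariant under the $\Phi_t$, so the standard integrable $J_0$ is admissible for every $\overline W_t$; these cobordisms are all obtained from $W_0=E(c,d)\setminus E(a,b)$ by retwisting the identification of the concave end with $\partial E(a,b)$ by $\Phi_t$. In $E(c,d)\setminus E(a,b)$ the annulus lying in the complex line $\{z_2=0\}$ is a $J_0$--holomorphic cylinder from $\delta_1$ to $\gamma_1$; it is the unique one and it is regular --- uniqueness from positivity of intersections with the holomorphic curves $\{z_1=0\}$ and $\{z_2=0\}$ together with a relative intersection computation, in the spirit of the ellipsoid curve counts of McDuff and of Cristofaro--Gardiner, and regularity from automatic transversality. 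So $\mathcal M_t$ consists of a single cylinder $A_t$ for every $t$ and $\bigsqcup_t\mathcal M_t\cong S^1$. Reading off the twist as the concave end is retwisted: for $t\in[0,\tfrac12]$ the retwisting is rotation by $e^{4\pi it}$ in the $z_1$--plane, which sweeps out a full turn over $[0,\tfrac12]$, while for $t\in[\tfrac12,1]$ it fixes $\gamma_1$ pointwise; hence $t\mapsto\tau(A_t)$ winds exactly once around $\mathbb R/\mathbb Z$, so $\rho(\{\varphi_t\})\neq 0$.

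Finally, to conclude: if $\{\varphi_t\}$ were contractible, it would bound a disk $D^2\to\sy(E(a,b),E(c,d))$ with $\partial D^2\mapsto\{\varphi_t\}$; forming the associated family of completed cobordisms over $D^2$ and a generic compatible family $\{J_{(t,s)}\}_{(t,s)\in D^2}$ restricting to $J_0$ over $\partial D^2$, the same compactness input would make $\mathcal M_{D^2}:=\bigsqcup_{(t,s)\in D^2}\mathcal M_{(t,s)}$ a compact $2$--manifold with boundary $\bigsqcup_{t\in S^1}\mathcal M_t$, over which $\tau$ extends; therefore $\big[\bigsqcup_{t\in S^1}\mathcal M_t\big]=0$ in $H_1(\mathcal M_{D^2})$ and so $\rho(\{\varphi_t\})=0$, contradicting the previous paragraph. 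I expect the main obstacle to be exactly this compactness-and-transversality package: ruling out breaking, bubbling and multiple covers uniformly over the parameter space (which is where the ECH-capacity inequality is indispensable), controlling the Reeb spectrum of $\psi_{(t,s)}(\partial E(a,b))$ when the embeddings are not Liouville near the boundary (via a preliminary normalization leaving the homotopy class of the loop unchanged, or via symplectomorphism-invariance of ECH capacities), and arranging transversality for the parametrized problem while keeping all almost complex structures cylindrical on the ends.
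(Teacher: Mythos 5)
Your proposal follows essentially the same strategy as the paper: form the parametric moduli space of index-zero cylinders from the shortest orbit of the target to the shortest orbit of the source, use the ECH-capacity inequality $c_1^{\mathrm{ECH}}(X_{\Omega_2})<c_2^{\mathrm{ECH}}(X_{\Omega_1})$ (here $c<\min(2a,b)$) together with SFT compactness to rule out breaking, exhibit the cylinder $\{z_2=0\}$ for an $S^1$-equivariant $\widehat J$, prove it is unique, and extract a degree-$\pm1$ asymptotic-marker circle map that cannot extend over the resulting disk-shaped moduli space; your ``twist'' $\tau(u)$ is the same invariant as the paper's $t\mapsto p_z$. Two small corrections/contrasts: the Fredholm index in the SFT convention of (\ref{equation:Fredholmindex}) already accounts for domain reparametrizations, so these cylinders have $\mathrm{ind}(u)=0$, not $2$; and the paper's Proposition~\ref{proposition:onecylinder} proves uniqueness for \emph{every} compatible $J$ (via the writhe estimates and the relative adjunction formula (\ref{equation:adjunctionformula})), which it then uses to identify the universal moduli space $\mathcal{M}_{\mathfrak{J}}$ with the disk, whereas you scope uniqueness to $\widehat J$ over $\partial\mathbb{D}$ only and invoke instead null-homologousness of the boundary of a compact $2$-manifold --- a small but legitimate economy.
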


\begin{figure}[ht]
\centering
\begin{tikzpicture}[thick, > = stealth']
\begin{scope}
\draw[->] (-0.3,0) -- (4,0) coordinate[label = {below:$\pi|z_1|^2$}] (xmax);
\draw[->] (0,-0.3) -- (0,4) coordinate[label = {right:$\pi|z_2|^2$}] (ymax);
\draw[-] (0,2) -- (1,0) node[pos=1, below] {$a$} node[pos=0, left] {$b$} ;
\draw[-] (0,2.5) -- (1.5,0) node[pos=1, below] {$c$} node[pos=0, left] {$d$};
\end{scope}
\end{tikzpicture}
\captionsetup{justification=centering}
\caption{The loop $\{ \varphi_t\}_{t\in[0,1]}$ is noncontractible if \\ $a<c<b<d$ and $c<2a$.} 
\label{figure:ellipsoids}
\end{figure}
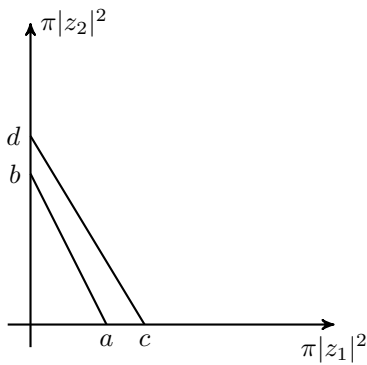

If $\max(a,b)\leq \min(c,d)$, then one can fit a ball between $E(a,b)$ and $E(c,d)$, meaning there exists $r>0$ such that $E(a,b) \subset B(r) \subset E(c,d)$, see Figure \ref{figure:ellipsoidsflipped}. Under this assumption, the loop $\{\varphi_t\}_{t\in[0,1]}$ is contractible. For a more general statement, see Proposition \ref{proposition:converse} below.

\begin{figure}[ht]
\centering
\begin{tikzpicture}[thick, > = stealth']
\begin{scope}
\draw[->] (-0.3,0) -- (4,0) coordinate[label = {below:$\pi|z_1|^2$}] (xmax);
\draw[->] (0,-0.3) -- (0,4) coordinate[label = {right:$\pi|z_2|^2$}] (ymax);
\draw[-] (0,2) -- (1,0) node[pos=1, below] {$a$} node[pos=0, left] {$b$} ;
\draw[-] (0,2.5) -- (3,0) node[pos=1, below] {$c$} node[pos=0, left] {$d$};
\draw[dash dot] (0,2.25) -- (2.25,0)  node[pos=1, below] {$r$} node[pos=0, left] {$r$} ;
\end{scope}
\end{tikzpicture}
\caption{The loop $\{ \varphi_t\}_{t\in[0,1]}$ is contractible if $\max(a,b)\leq \min(c,d)$.} \label{figure:ellipsoidsflipped}
\end{figure}
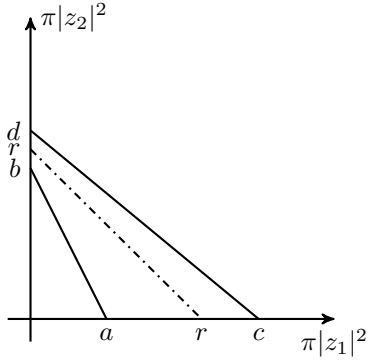

The method of proof we present in \S \ref{section:proof} does not answer whether the loop $\{\varphi_t\}_{t\in[0,1]}$ is contractible or not under the following assumption. 

\begin{question} 
\label{question:ellipsoids}
Assume $2a<c<b<d$. Is the loop $\{\varphi_t=\Phi_t|_{E(a,b)}\}_{t\in[0,1]}$ contractible in $\sy(E(a,b),E(c,d))$? 
\end{question}

\subsection{Main theorem}
\label{subsection:echcapacities}
We begin by recalling an important example of $4$--dimensional symplectic manifolds with boundary, in order to prepare for the statement of the main theorem. Given a domain $\Omega\subset \mathbb{R}_{\geq 0}^2$, we define the \textit{toric domain} 
\begin{equation}
\label{equation:toricdomain}
X_{\Omega}=\left\lbrace (z_1,z_2)\in\mathbb{C}^2\; \middle| \; \pi(|z_1|^2,|z_2|^2)\in\Omega\right\rbrace
\end{equation}
which, together with the restriction of the standard symplectic form $\omega_{\mathrm{std}}=dx_1\wedge dy_1 + dx_2\wedge dy_2$ on $\mathbb{C}^2$, is a symplectic manifold with boundary.
For example, if $\Omega$ is the triangle with vertices $(0,0)$, $(a,0)$ and $(0,b)$, then $X_{\Omega}$ is the ellipsoid $E(a,b)$ defined above, while if $\Omega$ is the rectangle with vertices $(0,0)$, $(a,0)$, $(0,b)$, and $(a,b)$, then $X_{\Omega}$ is the polydisk $P(a,b)=B^2(a)\times B^2(b)$. Note that we allow domains that have non-smooth boundary. The toric domains we work with in this paper have the following particular property.

\begin{definition} A \textit{convex toric domain} is a toric domain $X_{\Omega}$ defined by
\begin{equation} 
\label{equation:omega} \Omega = \left\lbrace (x,y)\in\mathbb{R}_{\geq 0}^2\; \middle| \; 0\leq x\leq a, \; 0 \leq y \leq f(x)\right\rbrace 
\end{equation}
such that its defining function $f:[0,a]\to \mathbb{R}_{\geq 0}$ is nonincreasing and concave.
\end{definition}

Even though we will not work with this type of domains in this paper, let us also recall that a \textit{concave toric domain} is a toric domain defined also by (\ref{equation:omega}) such that its defining function $f:[0,a]\to \mathbb{R}_{\geq 0}$ is nonincreasing, convex, and $f(a)=0$. For example, ellipsoids are the only toric domains that are both convex and concave, and polydisks are convex toric domains. We next explain how to compute the embedded contact homology (ECH) capacities of convex toric domains in order to state the main result of this paper.

Given a $4$--dimensional symplectic manifold $(X,\omega)$ with contact boundary $\partial X = Y$, its ECH capacities are a sequence of real numbers
\[0=c_0^{\mathrm{ECH}}(X,\omega) < c_1^{\mathrm{ECH}}(X, \omega) \leq \dots \leq \infty \]
constructed using a filtration by action of the ECH chain complex $ECC_{*}(Y, \lambda, J)$. The ECH capacities obstruct symplectic embeddings, meaning that if there exists a symplectic embedding $(X,\omega)\rightarrow (X',\omega')$ then $c_k(X,\omega) \leq c_k(X',\omega')$ for all $k\geq 0$. In particular, for the first and second ECH capacities of a convex toric domain, we can use the following explicit formulas, see \cite[Proposition 5.6]{hutchings2016beyond} for details.

\begin{proposition}
\label{proposition:echcapacities}
For a convex toric domain $X_{\Omega}$ with nice defining function $f:[0,a]\to\mathbb{R}_{\geq 0}$,
\begin{align*}
c_1^{\mathrm{ECH}}(X_\Omega) &= \min (a,f(0)) \text{ and} \\
c_2^{\mathrm{ECH}}(X_\Omega) &= \min (2a, x+f(x), 2f(0)),
\end{align*}
where $x\in(0,a)$ is the unique point where $f'(x)=-1$.

\end{proposition}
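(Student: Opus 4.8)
The plan is to compute $c_1^{\mathrm{ECH}}$ and $c_2^{\mathrm{ECH}}$ directly from the combinatorial description of ECH capacities of convex toric domains. Recall (from \cite{hutchings2016beyond}) that for a convex toric domain $X_\Omega$ the ECH capacities are given by a lattice-point counting formula: $c_k^{\mathrm{ECH}}(X_\Omega)$ is the minimum over all lattice paths, or more usefully for small $k$, $c_k^{\mathrm{ECH}}(X_\Omega) = \max\{\, \langle v, (a, f(0))\rangle\text{-type weighted sums}\,\}$; concretely, one has the formula $c_k^{\mathrm{ECH}}(X_\Omega) = \min_{\Lambda} \ell_\Omega(\Lambda)$ where $\Lambda$ ranges over convex integral paths enclosing at least $k+1$ lattice points (including endpoints on the axes), and $\ell_\Omega$ is the support-function length of $\Lambda$ with respect to $\Omega$. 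The strategy is to enumerate the finitely many relevant paths for $k = 1$ and $k = 2$ and minimize.

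First I would treat $k = 1$. The convex integral paths enclosing two lattice points are, up to the relevant symmetry, the path from $(1,0)$ to $(0,0)$ to $(0,1)$ broken into two edges, which comes in essentially two combinatorial types: the single horizontal-then-vertical corner at the origin (giving the two edge vectors $(0,1)$ and $(1,0)$), whose $\Omega$-length is $\max_{(x,y)\in\Omega} x + \max_{(x,y)\in\Omega} y$ — no, rather the edge-by-edge support sum — and the path with a single edge in direction $(1,1)$. Computing the support function of $\Omega$ in directions $(1,0)$, $(0,1)$, and $(1,1)$ and taking the minimum over the two path types yields $c_1^{\mathrm{ECH}}(X_\Omega) = \min(a, f(0))$, since the direction $(1,0)$ contributes $a$, the direction $(0,1)$ contributes $f(0)$, and the combined corner path is never smaller than both. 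I would write this out cleanly using the explicit support function $h_\Omega(v_1,v_2) = \max_{(x,y)\in\Omega}(v_1 x + v_2 y)$, noting that concavity and monotonicity of $f$ make these maxima easy to evaluate.

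Next, for $k = 2$ I would enumerate the convex integral paths enclosing three lattice points: the relevant minimal ones are (i) the two-edge path with vertices $(2,0), (0,0), (0,2)$ scaled appropriately — contributing $\min(2a, 2f(0))$ through the pure-axis directions — and (ii) the path through $(1,0), (0,0), (0,1)$ with the corner replaced so as to pass near the lattice point $(1,1)$'s neighbor, whose total $\Omega$-length is governed by the tangent line to $\partial\Omega$ of slope $-1$; evaluating this gives exactly $x_0 + f(x_0)$ where $x_0$ is the unique point with $f'(x_0) = -1$, because the support function in direction $(1,1)$ is $\max_x (x + f(x)) = x_0 + f(x_0)$ by concavity. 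Taking the minimum over all these path types produces $c_2^{\mathrm{ECH}}(X_\Omega) = \min(2a, x_0 + f(x_0), 2f(0))$. The uniqueness of $x_0$ — and hence well-definedness of the formula — follows from strict concavity where needed, or from interpreting $x_0$ as any point achieving the max; I would remark that if $f$ has a corner or a linear piece of slope $-1$ the max is still attained and the formula is unchanged.

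The main obstacle is bookkeeping: correctly identifying which finite list of convex lattice paths one must minimize over for $k = 1, 2$, and verifying that no other path gives a smaller value. This is where one must invoke the precise combinatorial formula for ECH capacities of convex toric domains from \cite[Proposition 5.6 and its proof]{hutchings2016beyond} rather than re-deriving it, and carefully match the support-function conventions (inner versus outer normals, and the convention that the enclosed region includes the coordinate axes) so that the edge vectors and the maxima over $\Omega$ are paired correctly. Once the conventions are fixed, each candidate value is an elementary maximum of a concave function over $\Omega$, and the proof reduces to a short case check.
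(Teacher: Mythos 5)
Your high-level idea — compute $c_1$ and $c_2$ from Hutchings' combinatorial model for convex toric domains — is in the right ballpark, but the specific lattice-path enumeration you give does not actually support the answers you state, and the paper itself uses a cleaner, equivalent route that avoids the bookkeeping you flag as the ``main obstacle.''

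The paper's argument is the one-line deduction after Lemma~\ref{lemma:echcapacities}: one combines Lemma~\ref{lemma:echcapacities} ($c_k^{\mathrm{ECH}}(X_\Omega) = \min\{\mathcal{A}(\alpha) : I(\alpha)=2k\}$) with the classification in Lemma~\ref{lemma:indexclassification}. The $I=2$ orbit sets are $e_{0,1}$ and $e_{1,0}$, whose actions ($a$ and $f(0)$, computed in \S\ref{subsection:reebdynamics}) give $c_1 = \min(a,f(0))$; the $I=4$ orbit sets are $e_{0,1}^2$, $e_{1,1}$, $e_{1,0}^2$, whose actions ($2a$, $x+f(x)$ at the slope $-1$ point, and $2f(0)$) give $c_2$. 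Your lattice-path version is a repackaging of the same combinatorics (paths $\leftrightarrow$ orbit sets, $\Omega$-length $\leftrightarrow$ action), so it could in principle be made to work, but as written it does not: the specific paths you enumerate for $k=1$ — the corner path through $(1,0),(0,0),(0,1)$ and the single diagonal edge — both bound a region containing \emph{three} lattice points, not two, so they belong to the $k=2$ enumeration; and their $\Omega$-lengths are $a+f(0)$ and $x+f(x)$, neither of which equals the $\min(a,f(0))$ you claim they yield. The actual $k=1$ minimizers are the single-edge paths along the axes from $(0,0)$ to $(1,0)$ and from $(0,0)$ to $(0,1)$, which you never write down. Likewise for $k=2$ the L-shaped path with vertices $(2,0),(0,0),(0,2)$ encloses five lattice points and has $\Omega$-length $2a+2f(0)$, not $\min(2a,2f(0))$; the correct list is the three paths with exactly three enclosed lattice points: two steps along the $x$-axis (length $2a$), two steps along the $y$-axis (length $2f(0)$), and the diagonal edge from $(0,1)$ to $(1,0)$ (length $x+f(x)$). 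So the final formulas you write are correct, but they do not follow from the enumeration you give; fixing this requires pinning down the precise ``$L(\Lambda)=k+1$'' constraint and then listing the actual minimal paths — at which point you have essentially reconstructed the orbit-set classification the paper appeals to directly.
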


For the definition of a \textit{nice} defining function, see \S \ref{subsection:absolutegrading}. Every defining function can be perturbed to be nice. Having introduced all the ingredients, we are ready to state the main result of this paper.

\begin{theorem}
\label{theorem:maintheorem}
Let $X_{\Omega_1}$ and $X_{\Omega_2}$ be convex toric domains with defining functions $f_1:[0,a]\to\mathbb{R}_{\geq 0}$ and $f_2:[0,c]\to\mathbb{R}_{\geq 0}$, respectively. Assume that $X_{\Omega_1}\subset X_{\Omega_2}$, $a<c<f_1(0)<f_2(0)$, and $c_1^{\mathrm{ECH}}(X_{\Omega_2})< c_2^{\mathrm{ECH}}(X_{\Omega_1})$.  Then, for $\Phi_t$ defined as in (\ref{equation:loop}), the loop of symplectic embeddings $\{\varphi_t=\Phi_t|_{X_{\Omega_1}}\}_{t\in[0,1]}$ is noncontractible in $\sy(X_{\Omega_1},X_{\Omega_2})$.
\end{theorem}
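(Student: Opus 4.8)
The plan is to argue by contradiction via a parametrized moduli space of holomorphic cylinders, in the spirit of Gromov's original nonsqueezing-type arguments but carried out in a family over the disk $D^2$ that a null-homotopy of $\{\varphi_t\}$ would provide. Suppose the loop were contractible in $\sy(X_{\Omega_1},X_{\Omega_2})$. Extending by $\varphi_1=\varphi_0=\mathrm{id}$ along the boundary circle, we would obtain a family $\{\varphi_w\}_{w\in D^2}$ of symplectic embeddings $X_{\Omega_1}\hookrightarrow X_{\Omega_2}$. Each such embedding produces a \emph{symplectic cobordism} from the contact boundary $\partial X_{\Omega_1}$ (as a concave end) to $\partial X_{\Omega_2}$ (as a convex end), namely the closure of $X_{\Omega_2}\setminus \varphi_w(X_{\Omega_1})$ with cylindrical ends attached. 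Thus the null-homotopy gives a family of such cobordisms parametrized by $D^2$, and after choosing a generic family of cobordism-compatible almost complex structures $\{J_w\}$, I would study the moduli space of (finite-energy, genus zero, two-punctured) $J_w$-holomorphic cylinders asymptotic at the positive end to the shortest Reeb orbit of $\partial X_{\Omega_2}$ (the one detecting $c_1^{\mathrm{ECH}}(X_{\Omega_2})$) and at the negative end to a suitable Reeb orbit on $\partial X_{\Omega_1}$. The hypothesis $a<c<f_1(0)<f_2(0)$ guarantees that the relevant short orbits sit in the $z_1$-direction on both boundaries, and $c_1^{\mathrm{ECH}}(X_{\Omega_2})<c_2^{\mathrm{ECH}}(X_{\Omega_1})$ is exactly the action/energy inequality that forces the negative asymptotic orbit to be the \emph{simple} short orbit on $\partial X_{\Omega_1}$ rather than a multiply-covered one or a longer orbit, so that the cylinders have ECH index (hence expected dimension, after quotienting by the $\mathbb{R}$-translation in the symplectization of the target) compatible with a compact $1$-dimensional parametrized moduli space with boundary.

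Next I would set up the ECH/Fredholm bookkeeping. Using the absolute grading recalled in \S\ref{subsection:absolutegrading} and Proposition \ref{proposition:echcapacities}, I would check that for a single cobordism (a single $w$) the expected dimension of the moduli space of such cylinders modulo target $\mathbb{R}$-translation is $0$, so that generically one gets a finite, signed count; and that this count is a cobordism invariant equal to what it is for the trivial/inclusion cobordism, where by positivity of intersection and automatic transversality it is $\pm 1$. (Here the point is the same mechanism as in the proof of Theorem \ref{theorem:ellipsoids}/Theorem \ref{theorem:maintheorem}'s obstruction: the short orbit of the target must be ``filled'' and the only way to fill it inside the cobordism is by a cylinder down to the short orbit of the domain.) Over the family $D^2$ the expected dimension goes up by $1$, so the parametrized moduli space $\mathcal{M}=\{(w,u): u \in \mathcal{M}_{J_w}\}$ is a $1$-manifold with boundary, and its boundary lies over $\partial D^2$, where the cobordism is the one coming from $\varphi_t=\Phi_t|_{X_{\Omega_1}}$. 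Counting $\partial\mathcal{M}$ mod $2$ gives $0$.

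The crux is then to compute this boundary count directly for the \emph{explicit} loop $\{\Phi_t\}$ and get a nonzero answer, contradicting the above. Because $\Phi_t$ acts by rotations in the coordinate planes, the complementary cobordism $X_{\Omega_2}\setminus \Phi_t(X_{\Omega_1})$ is, up to the obvious identifications, independent of $t$ as an unparametrized symplectic manifold, but the \emph{identification} of its positive boundary with the fixed contact manifold $\partial X_{\Omega_2}$ is twisted by $\Phi_t$; equivalently, one can keep the cobordism fixed and view $\Phi_t$ as acting on the asymptotic markers of the punctured cylinders. Tracking a holomorphic cylinder around the loop, the rotation $e^{4\pi i t}z_1$ for $t\in[0,1/2]$ followed by $e^{-4\pi it}z_2$ for $t\in[1/2,1]$ rotates the asymptotic evaluation of the cylinder at its two punctures by a total winding that does not cancel — the $z_1$-rotation moves the end asymptotic to the short orbit (which lives in the $z_1$-plane) while the $z_2$-rotation acts trivially on that orbit's normal data, so the loop acts on the $S^1$ worth of rigid cylinders (the asymptotic rotation parameter of the once-punctured-at-each-end cylinder) by degree $\pm 1$. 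Hence the parametrized moduli space over $\partial D^2$ has an odd number of boundary points, i.e. $\#\partial\mathcal{M}\equiv 1\pmod 2$, the desired contradiction.

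I expect the main obstacle to be the compactness and gluing analysis needed to make ``$\#\partial\mathcal{M}\equiv 0$'' rigorous: one must rule out, in the $1$-parameter family, degenerations of the cylinders into multi-level SFT buildings with extra components (in the symplectizations of $\partial X_{\Omega_1}$ or $\partial X_{\Omega_2}$, or carrying nontrivial homology in the cobordism), and this is precisely where the numerical hypothesis $c_1^{\mathrm{ECH}}(X_{\Omega_2})<c_2^{\mathrm{ECH}}(X_{\Omega_1})$ together with $a<c<f_1(0)<f_2(0)$ has to be used quantitatively: it should force every potential limit building to have exactly one nontrivial cylindrical component of the right asymptotics and only trivial cylinders elsewhere, so that $\mathcal{M}$ is genuinely a compact $1$-manifold with boundary only over $\partial D^2$. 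Establishing this ``no bad breaking'' dichotomy — essentially an action/index calculation combined with the classification of low-action Reeb orbits on convex toric boundaries and ECH-index inequalities — is the technical heart, and it is also where the hypothesis that the target cannot swallow a ball (contrast with Proposition \ref{proposition:converse}) enters, since once the target is large the short orbit of $X_{\Omega_2}$ can be capped off by a disk disjoint from $\varphi_w(X_{\Omega_1})$ and the whole argument collapses.
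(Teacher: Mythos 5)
Your overall strategy matches the paper's: form the family of completed cobordisms $\widehat{W}_{\varphi_z}$ from a putative null-homotopy, study the parametrized moduli space of holomorphic cylinders between the short Reeb orbits $\gamma_c$ and $\gamma_a$, rule out SFT breaking using the hypothesis $c_1^{\mathrm{ECH}}(X_{\Omega_2})<c_2^{\mathrm{ECH}}(X_{\Omega_1})$, and derive a contradiction by tracking the asymptotic marker data around $\partial\mathbb{D}$. You also correctly identify the compactness/no-breaking step as the technical heart, and your intuition about the loop acting with degree $\pm1$ on the asymptotic rotation parameter is exactly what drives the final contradiction.

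However, the dimension bookkeeping has a genuine error that breaks the final step as you phrase it. The cylinders in a cobordism $\widehat{W}_{\varphi}$ have Fredholm index $0$; there is no ``target $\mathbb{R}$-translation'' to quotient by in a (nontrivial) cobordism, unlike in a symplectization, so that phrase is a red herring. Over the $2$-dimensional parameter disk $\mathbb{D}$ the expected dimension goes up by $2$, not by $1$: the universal moduli space $\mathcal{M}_{\mathfrak{J}}$ is a \emph{surface}, not a $1$-manifold with boundary. Its boundary is the circle of cylinders over $\partial\mathbb{D}$ (one cylinder per $t$, by nonemptiness plus a uniqueness argument), so ``$\#\partial\mathcal{M}\pmod 2$'' is not a well-posed count. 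What the paper actually does is: (i) show, via Wendl's automatic transversality, SFT compactness, the no-breaking proposition, nonemptiness for the inclusion with an $S^1$-invariant $\widehat{J}$, and a \emph{uniqueness} result (proved by a writhe/linking estimate fed into the relative adjunction formula — a step you gesture at but don't really supply with ``positivity of intersection''), that $\mathcal{M}_{\mathfrak{J}}$ is homeomorphic to the closed disk $\mathbb{D}$; and then (ii) fix a point $p\in\gamma_c$, follow the unique cylinder's asymptotic ray from $p$ down to a point $p_z\in\gamma_a$, and show that $t\mapsto p_{e^{2\pi i t}}$ is a degree $-1$ map $S^1\to\gamma_a\simeq S^1$. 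A degree $-1$ circle map cannot factor through the disk $\mathcal{M}_{\mathfrak{J}}$, which is the contradiction. To repair your version along the lines you intended, you would need to take the preimage of a regular value of the ``negative asymptotic marker'' map $\mathcal{M}_{\mathfrak{J}}\to\gamma_a$ to get a $1$-manifold whose boundary points you then count — but that is exactly the degree argument in disguise, and writing it that way makes the $2$-dimensionality of $\mathcal{M}_{\mathfrak{J}}$ and the uniqueness of the cylinder above each parameter indispensable, neither of which your proposal nails down.
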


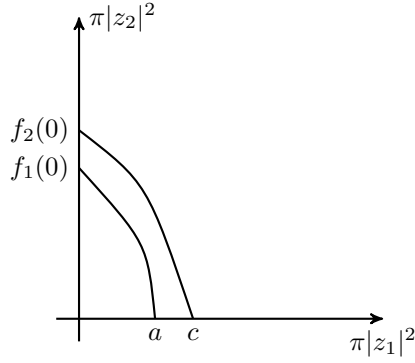
\begin{figure}[ht]
\centering
\begin{tikzpicture}[thick, > = stealth']
\begin{scope}
\draw[->] (-0.3,0) -- (4,0) coordinate[label = {below:$\pi|z_1|^2$}] (xmax);
\draw[->] (0,-0.3) -- (0,4) coordinate[label = {right:$\pi|z_2|^2$}] (ymax);
\draw[thick, black] (0,2) .. controls (0.9,1) .. (1,0)  node[pos=1, below] {$a$} node[pos=0, left] {$f_1(0)$};
\draw[thick, black] (0,2.5) .. controls (0.9,1.8) .. (1.5,0)  node[pos=1, below] {$c$} node[pos=0, left] {$f_2(0)$};
\end{scope}
\end{tikzpicture}
\captionsetup{justification=centering}
\caption{The loop $\{ \varphi_t\}_{t\in[0,1]}$ is noncontractible if \\
$X_{\Omega_1}\subset X_{\Omega_2}$, $a<c<f_1(0)<f_2(0)$,  and  $c_1^{\mathrm{ECH}}(X_{\Omega_2}) < c_2^{\mathrm{ECH}}(X_{\Omega_1})$.} \label{figure:maintheorem}
\end{figure}

\begin{remark} \mbox{}
\begin{enumerate}
\item[i.] By symmetry, Theorem \ref{theorem:maintheorem} also holds if we assume $f_1(0)<f_2(0)<a<c$ instead of $a<c<f_1(0)<f_2(0)$. See Figure \ref{figure:maintheorem} for an example where the bounds in the hypothesis of Theorem \ref{theorem:maintheorem} hold.
\item[ii.]  For $X_{\Omega_1}=E(a,b)$ and $X_{\Omega_2}=E(c,d)$ satisfying $a<c<b<d$, as in the hypothesis of Theorem \ref{theorem:ellipsoids}, we compute $c_1^{\mathrm{ECH}} (E(c,d))=\min(c,d)=c$ and $c_2^{\mathrm{ECH}} (E(a,b))=\min(2a, b)$. Hence, Theorem \ref{theorem:ellipsoids} is a special case of Theorem \ref{theorem:maintheorem}.
\end{enumerate}
\end{remark}

If the target $X_{\Omega_2}$ is large enough, the loop $\{ \varphi_t\}_{t\in[0,1]}$ becomes contractible, see Figure \ref{figure:converse}.
\begin{figure}[ht]
\centering
\begin{tikzpicture}[thick, > = stealth']
\begin{scope}
\draw[->] (-0.3,0) -- (4,0) coordinate[label = {below:$\pi|z_1|^2$}] (xmax);
\draw[->] (0,-0.3) -- (0,4) coordinate[label = {right:$\pi|z_2|^2$}] (ymax);
\draw[thick, black] (0,2) .. controls (0.9,1) .. (1,0)  node[pos=1, below] {$a$} node[pos=0, left] {$f_1(0)$};
\draw[thick, black] (0,3.5) .. controls (1.9,2.8) .. (2.5,0)  node[pos=1, below] {$c$} node[pos=0, left] {$f_2(0)$};
\draw[thick, dash dot] (0,2.3) -- (2.3,0) node[pos=1, below]{$r$} node[pos=0, left]{$r$};
\end{scope}
\end{tikzpicture}
\caption{If $X_{\Omega_1}\subset B(r) \subset X_{\Omega_2}$, the loop $\{ \varphi_t\}_{t\in[0,1]}$ is contractible.} \label{figure:converse}
\end{figure}
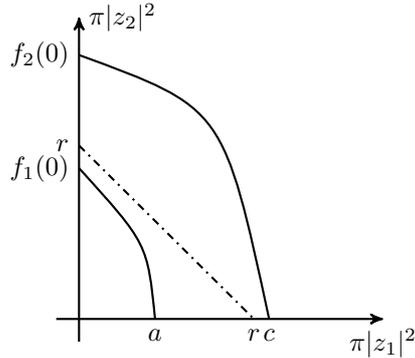
\begin{proposition} 
\label{proposition:converse}
Assume there exists $r>0$ such that $X_{\Omega_1} \subset B^4(r) \subset X_{\Omega_2}$. Then the loop $\{\varphi_t=\Phi_t|_{ X_{\Omega_1}}\}_{t\in[0,1]}$ is contractible in $\sy(X_{\Omega_1},X_{\Omega_2})$ . 
\end{proposition}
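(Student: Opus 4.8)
The plan is to exploit the fact that the loop $\{\Phi_t\}$ lies entirely in the unitary group $U(2)\subset\mathrm{Sp}(4,\mathbb{R})$, together with the elementary observation that the round ball $B^4(r)=E(r,r)$ is invariant under $U(2)$. Indeed, for each $t$ the matrix $\Phi_t$ is diagonal and unitary, so $\Phi_t\in U(2)$ and hence $\Phi_t(B^4(r))=B^4(r)$. Combined with the hypothesis $X_{\Omega_1}\subset B^4(r)\subset X_{\Omega_2}$, this shows $\varphi_t=\Phi_t|_{X_{\Omega_1}}$ factors as $X_{\Omega_1}\hookrightarrow B^4(r)\xrightarrow{\Phi_t}B^4(r)\hookrightarrow X_{\Omega_2}$. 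Writing $\iota_1\colon X_{\Omega_1}\hookrightarrow B^4(r)$ and $\iota_2\colon B^4(r)\hookrightarrow X_{\Omega_2}$ for the inclusions, the loop $\{\varphi_t\}$ is therefore the image of the loop $\{\Phi_t\}$ under the composite of continuous maps
\[
U(2)\ \hookrightarrow\ \sy(B^4(r),B^4(r))\ \xrightarrow{\,g\,\mapsto\,\iota_2\circ g\circ\iota_1\,}\ \sy(X_{\Omega_1},X_{\Omega_2}),
\]
where the first map sends $A$ to $A|_{B^4(r)}$. Hence it is enough to show that $\{\Phi_t\}$ is nullhomotopic as a loop in $U(2)$.

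For this I would compute its class in $\pi_1(U(2))\cong\mathbb{Z}$, which is detected by the winding number of the determinant loop $t\mapsto\det\Phi_t\in S^1$. On $\left[0,\tfrac{1}{2}\right]$ one has $\det\Phi_t=e^{4\pi i t}$, which winds once positively around $S^1$; on $\left[\tfrac{1}{2},1\right]$ one has $\det\Phi_t=e^{-4\pi i t}$, which winds once negatively. The total winding number is $1+(-1)=0$, so $\{\Phi_t\}$ is nullhomotopic in $U(2)$. Pushing a nullhomotopy $\{A_{t,s}\}\subset U(2)$ (with $A_{t,0}=\Phi_t$ and $A_{t,1}=\mathrm{id}$) forward through the maps above yields the nullhomotopy $\{A_{t,s}|_{X_{\Omega_1}}\}$ of $\{\varphi_t\}$ in $\sy(X_{\Omega_1},X_{\Omega_2})$, each $A_{t,s}|_{X_{\Omega_1}}$ being a well-defined symplectic embedding into $X_{\Omega_2}$ precisely because $A_{t,s}(B^4(r))=B^4(r)$.

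I do not expect a real obstacle here: the whole content is that $\{\Phi_t\}$ lands in $U(2)$, that the ball is $U(2)$-invariant, and the one-line count $1+(-1)=0$ (which incidentally reproves the assertion from the introduction that $\{\Phi_t\}$ is contractible in $\mathrm{Sp}(4,\mathbb{R})$, and avoids any appeal to Gromov's theorem on the homotopy type of $\sy(B^4(r),B^4(r))$). The one point that needs a little care is that the contraction must proceed through maps respecting the nesting $B^4(r)\subset X_{\Omega_2}$ — which is automatic inside $U(2)$ — rather than through an arbitrary contraction in $\mathrm{Sp}(4,\mathbb{R})$, whose intermediate linear maps need not send $X_{\Omega_1}$ into $X_{\Omega_2}$.
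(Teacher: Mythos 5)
Your proof is correct and follows essentially the same approach as the paper: contract $\{\Phi_t\}$ inside $U(2)$ and use that unitary maps preserve $B^4(r)$ so the restricted nullhomotopy stays in $\sy(X_{\Omega_1},X_{\Omega_2})$. The only difference is that you explicitly verify the nullhomotopy in $U(2)$ via the winding number of $\det\Phi_t$, whereas the paper simply invokes this fact (which it states earlier in the introduction).
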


\begin{proof}

Since the loop $\{\Phi_t\}_{t\in[0,1]}$ is contractible in $U(2)$, there exists a homotopy of unitary maps $\{\Phi_z\}_{z\in \mathbb{D}}$  contracting it, where $\mathbb{D}$ denotes the closed unit disk. For each $z\in \mathbb{D}$, the operator norm of $\Phi_z \in U(2)$ is $||\Phi_z || = 1$, and hence $\mathrm{im} \left(\Phi_z |_{X_{\Omega_1}}\right) \subset B(r) \subset X_{\Omega_2}$. So the $2$--parameter family of restrictions $\{ \Phi_z|_{X_{\Omega_1}} \}_{z\in\mathbb{D}}$ is contained in $\sy(X_{\Omega_1},X_{\Omega_2})$ and provides a homotopy from $\{\varphi_t\}_{t\in[0,1]}$ to the constant loop.
\end{proof}

\subsection{Strategy of proof and the organization of the paper}
We use the following strategy to prove Theorem \ref{theorem:maintheorem}. For each symplectic embedding $\varphi: X_{\Omega_1}\to X_{\Omega_2}$, we add to the compact symplectic cobordism ($X_{\Omega_2}\setminus\mathrm{int}(\varphi (X_{\Omega_1})),\omega_{\mathrm{std}})$, a positive cylindrical end at $\partial X_{\Omega_2}$ and a negative cylindrical end at $\partial X_{\Omega_1}$, in order to construct the completed symplectic cobordism $\widehat{W}_{\varphi}=(-\infty,0]\times \partial X_{\Omega_1} \cup (X_{\Omega_2} \setminus \mathrm{int}\; \varphi (X_{\Omega_1}))\cup [0,\infty) \times \partial X_{\Omega_2}$. After choosing an almost complex structure $J$ that is compatible with the cobordism structure on $\widehat{W}_{\varphi}$,  we define the moduli space $\mathcal{M}_{J}(\varphi)$ which consists of $J$--holomorphic cylinders in $\widehat{W}_{\varphi}$ that have a positive end at the shortest Reeb orbit on $\partial X_{\Omega_2}$ and a negative end at the shortest Reeb orbit on $\partial X_{\Omega_1}$. 

Using automatic transversality together with a compactness argument which works under the hypothesis of Theorem \ref{theorem:maintheorem}, we show that for each $\varphi\in\sy( X_{\Omega_1}, X_{\Omega_2})$ and for each compatible almost complex structure $J$, the moduli space $\mathcal{M}_{J}(\varphi)$ is a finite set. We directly construct an almost complex structure $\widehat{J}$ and a $\widehat{J}$--holomorphic cylinder with the right asymptotics, to show that $\mathcal{M}_{\widehat{J}}(\varphi_0)$ is nonempty for the restriction of the inclusion map $\varphi_0$ and the particular choice of $\widehat{J}$. We describe the cylinders near their asymptotic ends to prove that, whenever nonempty,  $\mathcal{M}_{J}(\varphi)$ contains a unique $J$--holomorphic cylinder. 

We complete the proof using an argument by contradiction. We assume the loop $\{\varphi_t\}_{t\in[0,1]}$ is contractible by the homotopy $\{\varphi_z\}_{z\in \mathbb{D}}$, $\varphi_z \in \sy( X_{\Omega_1}, X_{\Omega_2})$ for each $z\in\mathbb{D}$. We choose a $2$--parameter family of almost complex structures $\mathfrak{J}=\{J_z\}_{z\in \mathbb{D}}$ so that $J_z$ is compatible with the cobordism structure on $\widehat{W}_{\varphi_z}$ and $J_z=\widehat{J}$ for all $z\in\partial \mathbb{D}$. We define the universal moduli space $\mathcal{M}_{\mathfrak{J}}=\sqcup_{z\in\mathbb{D}} \mathcal{M}_{J_z}(\varphi_z)$ and, using parametric transversality for generic families of almost complex structures, we show that, for a generic choice of $\mathfrak{J}$ as above, the moduli space $\mathcal{M}_{\mathfrak{J}}$ is a $2$--dimensional manifold. Assuming the bounds in the hypothesis of Theorem \ref{theorem:maintheorem}, we conclude using SFT compactness and the description of each $\mathcal{M}_{J_z}(\varphi_z)$ that $\mathcal{M}_{\mathfrak{J}}$ is homeomorphic to the closed disk $\mathbb{D}$. 

For the final details, we fix a parametrization of the shortest Reeb orbits on $\partial X_{\Omega_2}$ together with a point $p$ on the same Reeb orbit. For each $\varphi_z$, we trace, on the unique cylinder $[u_z]\in \mathcal{M}_{J_z}(\varphi)$, the vertical ray that is asymptotic to $p$ at $\infty$ and record the point $p_z$ where it lands at $-\infty$ on the shortest Reeb orbit on $\partial X_{\Omega_1}$. We then study the composition of maps
\[
\begin{array}{ccccccc}
S^1 & \to & \sy(X_{\Omega_1},X_{\Omega_2})  & \to & \mathcal{M}_{\mathfrak{J}} & \to & S^1 \\
t & \mapsto & \varphi_t=\varphi_z & \mapsto & (z,[u_z]) & \mapsto & p_z.
\end{array}
\]
and show that this circle map has degree $-1$. This provides the contradiction we are looking for, since we previously showed that $\mathcal{M}_{\mathfrak{J}}$ is homeomorphic to the closed disk $\mathbb{D}$.

The paper is divided in sections as follows. In \S \ref{section:orbits}, we classify the embedded Reeb orbits on the boundary of a convex toric domain. We make use of this classification, together with an automatic transversality argument, to prove the compactness of the moduli space $\mathcal{M}_{J}(\varphi)$ in \S \ref{section:rullingout}. We also use the classification in \S \ref{section:orbits} to show the compactness of the moduli space $\mathcal{M}_{\mathfrak{J}}$ in \S \ref{subsection:proof}. Finally, \S \ref{subsection:nonemptiness} contains the argument for the existence of $J$--holomorphic cylinders with the right asymptotics, \S \ref{subsection:uniqueness} contains the argument for the uniqueness of $J$--holomorphic cylinders in $\mathcal{M}_{J}(\varphi)$, and \S \ref{subsection:proof} presents the details behind the construction of the circle map above, in order to complete the proof.

\textbf{Acknowledgements.} I would like to thank my advisor, Michael Hutchings, for all the help and ideas he shared with me. I would also like to thank Chris Wendl for clarifying some of my mathematical confusions during my visit at Humboldt--Universit\"at zu Berlin. Finally, I would like to thank my friends, Julian Chaidez and Chris Gerig, for the many helpful conversations we had.
\section{Reeb dynamics and the ECH index}
\label{section:orbits}
\subsection{Geometric setup}
Let $(Y,\xi)$ be a closed $3$--dimensional contact manifold with contact form $\lambda$, i.e. $\xi = \ker \lambda$. The \textit{Reeb vector field} $R$ corresponding to $\lambda$ is uniquely defined as the vector field satisfying $d\lambda(R,\cdot)=0$ and $\lambda(R)=0$. A \textit{Reeb orbit} is a map $\gamma:\mathbb{R}/T\mathbb{Z}\to Y$ for some $T>0$, modulo translations of the domain, such that $\gamma^{\prime}(t)=R(\gamma(t))$. The \textit{action} of a Reeb orbit $\gamma$ is defined by $\mathcal{A}(\gamma)=\int_{S^1} \gamma^{*} \lambda$ and is also equal to the period of $\gamma$.

For a fixed Reeb orbit $\gamma$, the linearization of the Reeb flow of $R$ induces a symplectic linear map $P_{\gamma}: (\xi_{\gamma(0)},d\lambda)\to(\xi_{\gamma(0)},d\lambda)$, called the \textit{linearized return map}.
A Reeb orbit $\gamma:\mathbb{R}/T\mathbb{Z}$ is called \textit{nondegenerate} if its linearized return map $P_{\gamma}$ does not have $1$ as an eigenvalue. We call $\gamma$ \textit{elliptic} if the eigenvalues of $P_{\gamma}$ are complex conjugate on the unit circle,  \textit{positive hyperbolic} if the eigenvalues of $P_{\gamma}$ are real and positive, and  \textit{negative hyperbolic} if the eigenvalues of $P_{\gamma}$ are real and negative. A contact form $\lambda$ is called \textit{nondegenerate} if all its Reeb orbits are nondegenerate.
\subsection{Reeb dynamics on $\partial X_{\Omega}$}
\label{subsection:reebdynamics}
In this section we compute the Reeb dynamics on the boundary of convex toric domain. Recall that a convex toric domain $X_{\Omega}\subset \mathbb{R}^4$ is defined by (\ref{equation:toricdomain}), with defining set $\Omega$ given by (\ref{equation:omega}). Similarly to the computations in \cite[\S 4.3]{hutchings2014lecture}, we choose scaled polar coordinates $(z_1,z_2)=(\sqrt{r_1/\pi}e^{i\theta_1},\sqrt{r_2/\pi}e^{i\theta_2})$ on $\mathbb{C}^2$  to obtain
\[ \omega_{\mathrm{std}} = \frac{1}{2\pi} \left(dr_1\wedge d\theta_1 + dr_2 \wedge d\theta_2 \right).\] 

The radial vector field 
\[ \rho = r_1 \frac{\partial}{\partial r_1} + r_2 \frac{\partial}{\partial r_2} \]
is a Liouville vector field for $\omega_{\mathrm{std}}$ defined on all $\mathbb{R}^4$. The boundary of the toric domain $\partial X_{\Omega}$ is transverse to $\rho$ and so
\[ \lambda_{\mathrm{std}} = \iota_{\rho} \omega_{\mathrm{std}} = \frac{1}{2\pi}\left(r_1d\theta_1 + r_2 d\theta_2\right)\]
restricts to a contact form on $\partial X_{\Omega}$.  The Reeb vector field $R$ corresponding to $\lambda_{\mathrm{std}}$ has the following expression. In the two coordinate planes, $R$ is given by 
\[ R = 
\begin{cases}
\frac{2\pi}{a}\frac{\partial}{\partial \theta_1} & \text{ if } z_2=0 \\
\frac{2\pi}{f(0)}\frac{\partial}{\partial \theta_2} & \text{ if } z_1=0.
\end{cases}
\]
While if $\pi(|z_1|^2,|z_2|^2)=(r_1,r_2)=(x,f(x))$ for some $x\in(0,a)$ with $f'(x)=\tan \phi$, $\phi \in [-\pi/2,0]$, then
\[ R = \frac{2\pi}{-x \sin \phi  + f(x) \cos \phi }\left(-\sin\phi \frac{\partial}{\partial \theta_1}+ \cos \phi \frac{\partial}{\partial \theta_2}\right).\]

The embedded Reeb orbits or $\lambda_{\mathrm{std}}|_{\partial X_{\Omega}}$ are classified as follows:
\begin{itemize}
\item The circle $e_{0,1}=\partial X_{\Omega} \cap \{z_2=0\}$ is an embedded elliptic Reeb orbit with action $\mathcal{A}(e_{0,1})=a$.
\item The circle $e_{1,0}=\partial X_{\Omega} \cap \{z_1=0\}$ is an embedded elliptic Reeb orbit with action $\mathcal{A}(e_{1,0})=f(0)$.
\item For each $x\in(0,a)$ with $f'(x)\in\mathbb{Q}$, the torus
\[ \{ z\in\partial X_{\Omega} | \pi(|z_1|^2,|z_2|^2)=(x,f(x)) \}\]
is foliated by a Morse-Bott circle of Reeb orbits. If $f'(x)=-\frac{p}{q}$ with $p, q$ relatively prime positive integers, then we call this torus $T_{p,q}$ and we compute that each orbit in this family has action $\mathcal{A} = q x + p f(x) $.
\end{itemize}

\begin{remark}
\label{remark:perturbcontactform}
The existence of Morse-Bott circles of Reeb orbits implies that the contact form $\lambda_{\mathrm{std}}|_{\partial X_{\Omega}}$ is degenerate. We need to perturb it in order to make it nondegenerate since the nondegeneracy allows the study of $J$--holomorphic curves with cylindrical ends asymptotic to Reeb orbits. 

For each $\epsilon>0$, we can perturb $\lambda_{\mathrm{std}}|_{\partial X_{\Omega}}$ to a nondegenerate $\lambda = h\lambda_{\mathrm{std}}|_{\partial X_{\Omega}}$, where $||h-1||_{C^0}<\epsilon$, so that each Morse-Bott family $T_{p,q}$ that has action $\mathcal{A} < 1/\epsilon$ becomes two embedded Reeb orbits of approximately the same action, more specifically an elliptic orbit $e_{p,q}$ and a hyperbolic orbit $h_{p,q}$. Moreover, no Reeb orbits of action $\mathcal{A} < 1/\epsilon$ are created and the Reeb orbits $e_{0,1}$ and $e_{1,0}$ are unaffected.

Such a perturbation of the contact form is equivalent to a perturbation of the hypersurface $\partial X_{\Omega}$ on which the restriction of $\lambda_{\mathrm{std}}$ becomes nondegenerate.
\end{remark}

\subsection{ECH index}
\label{subsection:ECHindex}
Embedded contact homology (ECH) is an invariant for $3$--dimensional contact manifolds due to Hutchings. See \cite{hutchings2014lecture} for a detailed account of history, motivation, construction, and applications of ECH. We give a brief overview of the definition of ECH following the notation from \cite{hutchings2009embedded}.

Let $(Y, \lambda)$ be a contact $3$--dimensional manifold with nondegenerate contact form $\lambda$. Given a convex toric domain $X_{\Omega}$, the boundary $\partial X_{\Omega}$ together with a perturbation of $\lambda_{\mathrm{std}}|_{\partial X_{\Omega}}$, as in Remark \ref{remark:perturbcontactform}, is such a contact manifold. 

An \textit{orbit set} is a finite set of pairs $\alpha=\{ (\alpha_i, m_i)\}$, where $\alpha_i$ are distinct embedded Reeb orbits and $m_i$ are positive integers. We will also use the multiplicative notation $\alpha = \prod \alpha_i^{m_i}$ for an orbit set $\alpha=\{ (\alpha_i, m_i)\}$. Denote by $[\alpha]$ the sum $\sum_i m_i[\alpha_i] \in H_1(Y)$ and define the action of $\alpha$ by $\mathcal{A}(\alpha)=\sum_i m_i\mathcal{A}(\alpha_i)$. If $\alpha=\{ (\alpha_i, m_i)\}$ and $\beta=\{ (\beta_j, n_j)\}$ are two orbit sets with $[\alpha]=[\beta]\in H_1(Y)$, then define $H_2(Y,\alpha,\beta)$ to be the set of relative homology classes of $2$--chains $Z$ such that $\partial Z = \sum m_i \alpha_i - \sum n_j\beta_j$. Note that 
$H_2(Y,\alpha,\beta)$ is an affine space over $H_2(Y)$.

Given a $Z\in H_2(Y, \alpha, \beta)$, define the \textit{ECH index} of $Z$  by the formula
\begin{equation} 
\label{equation:echindex}
I(\alpha,\beta,Z) = c_{\tau}(Z)+Q_{\tau}(Z)+CZ^{I}_{\tau}(\alpha)-CZ^{I}_{\tau}(\beta)
\end{equation}
where $\tau$ is a choice of symplectic trivializations of $\xi$ over the Reeb orbits  $\alpha_i$ and $\beta_j$, $c_{\tau}(Z)=c_1(\xi|_Z,\tau)$ denotes the relative first Chern class (see \cite[\S 2.5]{hutchings2009embedded}), $Q_{\tau}(Z)$ denotes the relative self-intersection number (see \cite[\S 2.7]{hutchings2009embedded}), and 
\[CZ^{I}_{\tau}(\alpha) = \sum_i \sum_{k=1}^{m_i} CZ_{\tau}(\alpha^k_i),\]
where $CZ_{\tau}(\gamma)$ is the Conley--Zehnder index with respect to $\tau$ of the orbit $\gamma$ (see \cite[\S 2.3]{hutchings2009embedded}). 

The ECH index does not depend on the choice of symplectic trivialization.  The definition of the ECH index $I$ can be extended to symplectic cobordisms by generalizing the definitions of the relative first Chern class and of the self intersection number (see \cite[\S 4.2]{hutchings2009embedded}). 

If $Z\in H_2(Y, \alpha, \beta)$ and $W\in H_2(Y, \beta, \gamma)$, then $I(Z+W)=I(Z)+I(W)$. In the particular case of starshaped hypersurfaces in $\mathbb{R}^4$, this implies there is an absolute $\mathbb{Z}$ grading on orbit sets as follows. Since $H_2(Y)=H_2(S^3)=0$, for every pair of orbit sets $\alpha$ and $\beta$ there is an unique class $Z\in H_2(Y, \alpha, \beta)$. Define $I(\emptyset)=0$ for the empty orbit set and set
\[ I(\alpha):=I(\alpha,\emptyset,Z)\in\mathbb{Z},\]
where $Z$ is the unique element of $H_2(Y,\alpha,\emptyset)$. Also, let $c_{\tau}(\alpha):=c_{\tau}(Z)$ and $Q_{\tau}(\alpha):=Q_{\tau}(Z)$.

\subsection{Absolute grading on $\partial X_{\Omega}$}
\label{subsection:absolutegrading}
Following the details in \cite[\S 5]{hutchings2016beyond}, we recall the classification of the orbit sets on the boundary of a convex toric domain $X_{\Omega}$ that have ECH index $I\leq4$. 

Similarly to \cite[Lemma 5.4]{hutchings2016beyond}, we first perform a perturbation of the geometry of $\partial X_\Omega$  (see Figure \ref{figure:generalposition}). This means we can assume, without loss of generality, that the function $f:[0,a]\to\mathbb{R}_{\geq 0}$ defining $\Omega$ is \textit{nice}, meaning that $f$ satisfies the following properties:
\begin{itemize}
\item $f$ is smooth,
\item $f'(0)$ is irrational and is approximately $0$,
\item $f'(a)$ is irrational and is very large, close to $-\infty$,
\item $f''(x)<0$ except for $x$ in small connected neighborhoods of $0$ and $a$.
\end{itemize}

\begin{figure}[h]
\centering
\begin{subfigure}{0.4\textwidth}
\begin{tikzpicture}[thick, > = stealth']
\begin{scope}
\draw[->] (-0.3,0) -- (4,0) coordinate[label = {below:$\pi|z_1|^2$}] (xmax);
\draw[->] (0,-0.3) -- (0,4) coordinate[label = {right:$\pi|z_2|^2$}] (ymax);
\draw [black] plot [smooth] coordinates { (0,2.8) (1,2.5)  (2,2) (2.5,1) (2.8,0)};
\filldraw[black] (2.8,0) circle (0pt) node[anchor=north] {$a$};
\filldraw[black] (0,2.8) circle (0pt) node[anchor=east] {$f(0)$};
\end{scope}
\end{tikzpicture}
\caption{Original toric domain}
\end{subfigure}
$\longrightarrow$
\begin{subfigure}{0.4\textwidth}
\begin{tikzpicture}[thick, > = stealth']
\begin{scope}
\draw[->] (-0.3,0) -- (4,0) coordinate[label = {below:$\pi|z_1|^2$}] (xmax);
\draw[->] (0,-0.3) -- (0,4) coordinate[label = {right:$\pi|z_2|^2$}] (ymax);
\draw [black!50] plot [smooth] coordinates { (0,2.8) (1,2.5)  (2,2) (2.5,1) (2.8,0)};
\draw [black] plot [smooth] coordinates { (0,2.63) (1,2.5) (2,2) (2.5,1) (2.63,0)};
\filldraw[black] (2.6,0) circle (0pt) node[anchor=north] {$a$};
\filldraw[black] (0,2.6) circle (0pt) node[anchor=east] {$f(0)$};
\end{scope}
\end{tikzpicture}
\caption{Defining function perturbed to be nice}
\end{subfigure}
\caption{Perturbating $X_{\Omega}$ to a nice position} \label{figure:generalposition}
\end{figure}
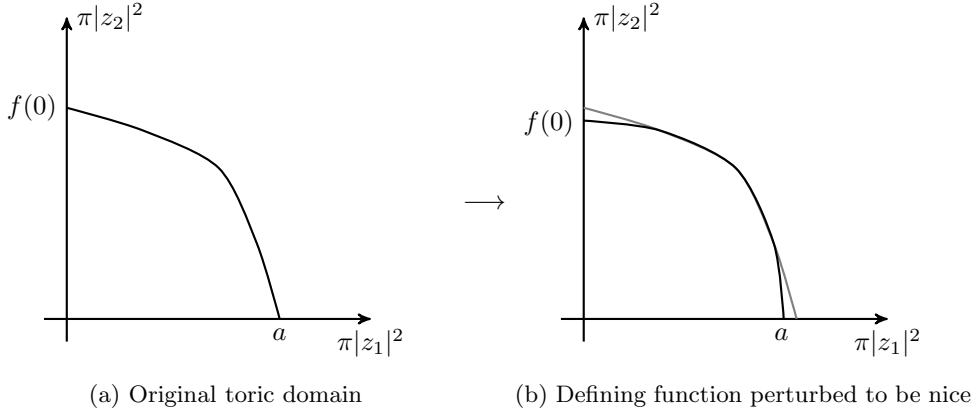

\begin{lemma}[{\cite[Example 1.12]{hutchings2016beyond}}]
\label{lemma:indexclassification}
Let $X_{\Omega}$ be a convex toric domain defined by a nice function $f$. Let  $\lambda$ be a nondegenerate contact structure obtained by perturbing $\lambda_{\mathrm{std}}|_{\partial X_{\Omega}}$ up to sufficiently large action. Then the orbit sets with ECH index $I\leq 4$ are classified as follows.
\begin{itemize}
\item $I=0$: $\emptyset$.
\item $I=1$: no orbit sets.
\item $I=2$: $e_{0,1}$ and $e_{1,0}$.
\item $I=3$: $h_{1,1}$.
\item $I=4$: $e_{0,1}^2$,  $e_{1,1}$, and $e_{1,0}^2$.
\end{itemize}
\end{lemma}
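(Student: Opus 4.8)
### Proof proposal

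The plan is to compute the ECH index $I(\alpha)$ of all orbit sets $\alpha$ built from the Reeb orbits classified in \S\ref{subsection:reebdynamics}, and then enumerate those with $I\le 4$. Since $\partial X_\Omega$ is a starshaped hypersurface in $\mathbb{R}^4$, it is diffeomorphic to $S^3$, so $H_2(\partial X_\Omega)=0$ and there is an absolute $\mathbb{Z}$-grading; thus for each orbit set $\alpha$ there is a unique $Z\in H_2(\partial X_\Omega,\alpha,\emptyset)$ and $I(\alpha)=c_\tau(Z)+Q_\tau(Z)+CZ^I_\tau(\alpha)$. The first step is to fix a convenient trivialization $\tau$ of $\xi$ over each orbit — the natural choice coming from the toric coordinates $(\theta_1,\theta_2)$, for which the formulas parallel those for the ellipsoid in \cite[\S 3.7]{hutchings2014lecture}. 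With respect to $\tau$, the Conley–Zehnder indices of the iterates of the elliptic orbits $e_{0,1}$ and $e_{1,0}$ are governed by the rotation numbers, which for a convex toric domain with nice $f$ are determined by the near-horizontal and near-vertical slopes of $f$: $e_{0,1}$ has rotation number slightly greater than $0$ (since $f'(0)\approx 0^-$) and $e_{1,0}$ has rotation number slightly greater than $1$ or, depending on conventions, large — the precise small perturbations make the low-index computation rigid. The orbits $e_{1,1}$ and $h_{1,1}$ arising from the Morse–Bott torus $T_{1,1}$ (the unique rational slope $-1$, which exists because $f$ is concave and passes through slope $-1$) sit at action $x+f(x)$ with $f'(x)=-1$, hence their action is $c_2^{\mathrm{ECH}}(X_\Omega)$ when that minimum is achieved at the interior point.

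Next I would assemble the index of a general orbit set $\alpha = e_{0,1}^{m}\, e_{1,0}^{n}\, \prod_k e_{p_k,q_k}^{a_k} h_{p_k,q_k}^{b_k}$. Using additivity of $c_\tau$ and the ECH index formula, together with the known quadratic behavior of $Q_\tau$ and $CZ^I_\tau$ in the multiplicities (this is exactly the "lattice point counting" picture: for the ellipsoid $E(a,b)$ one has $I(e_{0,1}^m e_{1,0}^n)$ equal to $2$ times the number of lattice points in a triangle, and for a convex toric domain the analogous region is the one bounded by the edges of $\Omega$), one gets that $I(\alpha)$ grows at least quadratically in the total multiplicity. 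The key monotonicity input is that adding any orbit to an orbit set increases $I$, and that each hyperbolic generator $h_{p,q}$ contributes an odd amount while raising $I$ by at least $1$ relative to the "expected" elliptic count. Concretely I would show: $I(e_{0,1})=I(e_{1,0})=2$; $I(e_{0,1}^2)=I(e_{1,0}^2)=4$; $I(e_{0,1}e_{1,0})\ge 6$ (this uses $a<f(0)$ so the relevant lattice region is not degenerate — in fact for a convex toric domain $I(e_{0,1}e_{1,0})$ is strictly larger than in the ellipsoid case by convexity of the region); $I(h_{1,1})=3$ and $I(e_{1,1})=4$; and $I$ of any orbit set involving $e_{p,q}$ or $h_{p,q}$ with $(p,q)\ne(1,1)$, or involving $e_{1,1}$ or $h_{1,1}$ together with any other orbit, is at least $5$. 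Ruling out products like $e_{0,1}h_{1,1}$ and $h_{1,1}^2$ (the latter is not even allowed, as $h_{1,1}$ is hyperbolic and cannot appear with multiplicity $>1$ in an ECH generator) finishes the enumeration.

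The main obstacle I anticipate is the bookkeeping of the Conley–Zehnder indices and the relative intersection form $Q_\tau$ for the orbits $e_{p,q}, h_{p,q}$ coming from the perturbed Morse–Bott tori, and in particular verifying that $I(h_{1,1})=3$ and $I(e_{1,1})=4$ rather than some other small values, and that no other $T_{p,q}$ contributes anything of index $\le 4$. Here I would lean heavily on \cite[\S 5]{hutchings2016beyond} and \cite[Example 1.12]{hutchings2016beyond}, which carry out precisely this computation: the perturbed orbits $e_{p,q}$ and $h_{p,q}$ have ECH index computable from the combinatorics of the edge vectors $(q,p)$ of $\Omega$, and the concavity/niceness of $f$ ensures that $T_{1,1}$ is the only rational-slope torus whose orbits can enter below index $5$. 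Since the statement is quoted as \cite[Example 1.12]{hutchings2016beyond}, the cleanest route is: recall the general index formula for orbit sets on $\partial X_\Omega$ from that reference, observe that the hypotheses "$f$ nice" and the structure of $\Omega$ put us exactly in its setting, and then simply solve the inequality $I(\alpha)\le 4$ over the (discrete) set of orbit sets, which by the quadratic growth is a finite check.
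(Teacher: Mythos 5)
The paper itself gives no proof of Lemma~\ref{lemma:indexclassification}: it is stated as a citation to \cite[Example 1.12]{hutchings2016beyond}, preceded only by the setup of the nice defining function. Your proposal ultimately defers to the same reference in its final paragraph, so the \emph{strategy} agrees with the paper. However, the sketch you supply in the middle paragraph contains a concrete error that would derail the calculation if you actually tried to carry it out, and which is worth flagging.

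You write that ``$e_{0,1}$ has rotation number slightly greater than $0$ (since $f'(0)\approx 0^-$) and $e_{1,0}$ has rotation number slightly greater than $1$ or, depending on conventions, large.'' Both attributions are off. The orbit $e_{0,1}$ sits over the corner $(a,0)$ of $\Omega$, so its rotation number is governed by the slope $f'(a)$, while $e_{1,0}$ sits over $(0,f(0))$ and is governed by $f'(0)$. More importantly, because a nice $f$ has $f'(a)$ close to $-\infty$ and $f'(0)$ close to $0^-$, \emph{both} orbits have rotation number close to $0^+$ (approximately $-1/f'(a)$ for $e_{0,1}$ and $-f'(0)$ for $e_{1,0}$). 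This is precisely why $CZ_\tau(e_{0,1}^k)=CZ_\tau(e_{1,0}^k)=1$ for all small $k$, which in turn is what makes $I(e_{0,1})=I(e_{1,0})=2$, $I(e_{0,1}^2)=I(e_{1,0}^2)=4$, and $I(e_{0,1}e_{1,0})=6$. If $e_{1,0}$ truly had rotation number a bit above $1$, you would get $CZ_\tau(e_{1,0})=3$ and hence $I(e_{1,0})=4$, contradicting the very classification you are trying to prove. Relatedly, the parenthetical ``in fact for a convex toric domain $I(e_{0,1}e_{1,0})$ is strictly larger than in the ellipsoid case by convexity of the region'' is backwards: for an ellipsoid $E(a,b)$ with $a<b$ one finds $I(\gamma_1\gamma_2)\ge 8$, whereas here the value is $6$, \emph{smaller}, exactly because the nice perturbation drives both rotation numbers towards $0$. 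So the correct intuition is that the nice convex toric domain behaves like a smoothed polydisk near its two corners, not like a near-round ellipsoid. With these corrections the rest of your outline (lattice-point growth of $I$, the $T_{1,1}$ torus producing $e_{1,1}$ and $h_{1,1}$, ruling out $h_{1,1}^2$, and deferring the combinatorial bookkeeping to \cite[\S 5]{hutchings2016beyond}) is sound and matches what the cited example does.
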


In general, the classification of orbit set generators, up to larger ECH index and action, provides a combinatorial model to compute the sequence of ECH capacities of a convex toric domain using the following formula.
\begin{lemma}[{\cite[Lemma 5.6]{hutchings2016beyond}}]
\label{lemma:echcapacities}
For a convex toric domain $X_{\Omega}$ and a nonnegative integer $k$,
\[c_k^{\mathrm{ECH}}(X_{\Omega})= \mathrm{min}\{\mathcal{A}(\alpha) \; | \; I(\alpha)=2k\}.\]
\end{lemma}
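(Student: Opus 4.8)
\emph{Strategy.} The identity will follow by comparing the definition of ECH capacities via the action filtration with the known ECH of $\partial X_\Omega$ and a grading argument. First I would fix, as in Remark~\ref{remark:perturbcontactform}, a nondegenerate perturbation $\lambda$ of $\lambda_{\mathrm{std}}|_{\partial X_\Omega}$ up to action well above $\min\{\mathcal{A}(\alpha):I(\alpha)=2k\}$; both sides of the claimed identity are insensitive to the size of the perturbation and to the action cutoff, so it suffices to prove the statement for this fixed $\lambda$. By definition $c_k^{\mathrm{ECH}}(X_\Omega)$ is the ECH spectral invariant of the distinguished generator $\zeta_k$ of $ECH_{2k}(\partial X_\Omega)$, i.e.\ the infimum of those $L$ for which $\zeta_k$ lies in the image of $ECH^L_{2k}(\partial X_\Omega,\lambda)\to ECH_{2k}(\partial X_\Omega,\xi)$, where $ECH^L$ is computed from the subcomplex generated by orbit sets of action $<L$. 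Since $\lambda$ is nondegenerate and has only finitely many orbit sets below any fixed action bound, this infimum is attained and equals $\min\{\mathcal{A}(z)\;|\;z\in ECC_{2k}(\partial X_\Omega,\lambda),\ \partial z=0,\ [z]=\zeta_k\}$, where $\mathcal{A}(z)$ is the largest action of an orbit set occurring in $z$. So the task reduces to computing this minimum.

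\emph{Input and the lower bound.} Because $\partial X_\Omega$ is contactomorphic to the standard tight $S^3$, one has $ECH_*(\partial X_\Omega,\xi)\cong\mathbb{Z}$ for each even degree $*=2k\geq 0$ and $0$ otherwise, with generators $\zeta_k$ satisfying $\zeta_0=[\emptyset]$ and $U\zeta_{k+1}=\zeta_k$; this is the ECH of $S^3$, computable directly for ellipsoids or via Taubes' isomorphism with Seiberg--Witten Floer cohomology. In particular (as noted in \S\ref{subsection:ECHindex}, using $H_2(\partial X_\Omega)=0$) the chain complex $ECC_*(\partial X_\Omega,\lambda)$ carries an absolute $\mathbb{Z}$-grading by ECH index, the differential drops degree by one, and every cycle representing $\zeta_k$ is a $\mathbb{Z}$-combination of orbit sets $\alpha$ with $I(\alpha)=2k$. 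Consequently, if $\partial z=0$ and $[z]=\zeta_k$ then $z\neq 0$, every $\alpha$ occurring in $z$ has $\mathcal{A}(\alpha)\geq\min\{\mathcal{A}(\beta):I(\beta)=2k\}$, and hence $\mathcal{A}(z)\geq\min\{\mathcal{A}(\beta):I(\beta)=2k\}$. This yields $c_k^{\mathrm{ECH}}(X_\Omega)\geq\min\{\mathcal{A}(\alpha):I(\alpha)=2k\}$.

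\emph{The upper bound.} Let $\alpha_0$ be an orbit set with $I(\alpha_0)=2k$ of minimal action $m=\min\{\mathcal{A}(\alpha):I(\alpha)=2k\}$. It suffices to show $\partial\alpha_0=0$ and $[\alpha_0]=\pm\zeta_k$, for then $c_k^{\mathrm{ECH}}(X_\Omega)\leq\mathcal{A}(\alpha_0)=m$. Here I would invoke the combinatorial model for the ECH chain complex of a convex toric domain from \cite{hutchings2016beyond}: the ``convex generators'' and their associated lattice polygonal paths, together with the combinatorial formulas for the ECH index, the action, the $U$-map, and (via the index inequality and the partition conditions) the differential; Lemma~\ref{lemma:indexclassification} is the low-index instance of this model. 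Two points are needed. First, $\partial\alpha_0=0$: every orbit set in $\partial\alpha_0$ has index $2k-1$ and action strictly less than $m$, and one checks from the combinatorial description that any such generator enters $\partial\alpha_0$ with coefficient zero, since there is not enough room in index and action for an ECH-index-$1$ curve out of a minimal-action generator. Second, $[\alpha_0]$ generates $ECH_{2k}(\partial X_\Omega)\cong\mathbb{Z}$: this follows from the combinatorial computation of the $U$-map, which sends the minimal-action index-$2k$ generator to the minimal-action index-$(2k-2)$ generator, combined with $\zeta_0=[\emptyset]$ and induction on $k$ (alternatively, by matching against the known ECH capacities of a comparison ellipsoid).

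\emph{Main obstacle.} Everything through the lower bound is formal. The genuine work is the upper bound, and specifically controlling the ECH differential of $\partial X_\Omega$ finely enough to see that the minimal-action generator in each even degree is a non-bounding cycle whose class is $\pm\zeta_k$; this is exactly what the combinatorial analysis of \cite{hutchings2016beyond} provides (the index inequality and partition conditions for holomorphic curves between convex generators, and the lattice-path computation of the $U$-map). Granting that machinery, the identity $c_k^{\mathrm{ECH}}(X_\Omega)=\min\{\mathcal{A}(\alpha):I(\alpha)=2k\}$ is immediate, and together with Lemma~\ref{lemma:indexclassification} it recovers the explicit formulas for $c_1^{\mathrm{ECH}}$ and $c_2^{\mathrm{ECH}}$ in Proposition~\ref{proposition:echcapacities}.
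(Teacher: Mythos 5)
This lemma is quoted in the paper from \cite[Lemma 5.6]{hutchings2016beyond} and is not proved in the paper, so there is no in-text proof to compare against; what follows is an assessment of your proposal on its own terms.

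Your reduction of $c_k^{\mathrm{ECH}}(X_\Omega)$ to the minimum action of a cycle in $ECC_{2k}$ representing the generator $\zeta_k$, and the resulting lower bound $c_k^{\mathrm{ECH}}(X_\Omega)\geq\min\{\mathcal{A}(\alpha):I(\alpha)=2k\}$, are correct and essentially formal, exactly as you say. The gap is in the upper bound, specifically in the justification that $\partial\alpha_0=0$ for the minimal-action generator $\alpha_0$ with $I(\alpha_0)=2k$. You argue that ``there is not enough room in index and action for an ECH-index-$1$ curve out of a minimal-action generator,'' but minimality of $\mathcal{A}(\alpha_0)$ among index-$2k$ orbit sets says nothing about the actions of index-$(2k-1)$ orbit sets, which are the possible targets of $\partial\alpha_0$. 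There can certainly be generators $\beta$ with $I(\beta)=2k-1$ and $\mathcal{A}(\beta)<\mathcal{A}(\alpha_0)$. For a concrete instance, take $X_\Omega=P(1,1)$: the minimal index-$4$ generators are $e_{0,1}^2$, $e_{1,0}^2$, and $e_{1,1}$, all of action approximately $2$, while $h_{1,1}$ has $I=3$ and action also approximately $2$; depending on the perturbation, $\mathcal{A}(h_{1,1})$ can be smaller than $\mathcal{A}(e_{0,1}^2)$, so there is in general ``room'' for a differential. Whether the coefficient actually vanishes is a substantive combinatorial/analytic fact about the ECH chain complex of a convex toric domain, not a consequence of action minimality, and it (together with showing $[\alpha_0]=\pm\zeta_k$ rather than a multiple or zero) is precisely the content that the cited lemma supplies. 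Your concluding sentence that ``granting that machinery, the identity is immediate'' is honest, but the specific reason you offer for $\partial\alpha_0=0$ does not hold up, so the upper bound remains unestablished in your sketch.
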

In particular, the equalities claimed in Proposition \ref{proposition:echcapacities} hold. Moreover, one can deduce the following lemma which we will use to rule out breaking.

\begin{lemma}
\label{lemma:higherindexorbits}
For a convex toric domain $X_{\Omega}$, orbit sets $\alpha$ with ECH index $I(\alpha)\geq 5$ have action $\mathcal{A}(\alpha)\geq c_2^{\mathrm{ECH}}(X_{\Omega})$. 
\end{lemma}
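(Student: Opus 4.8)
The plan is to reduce the statement to the combinatorial model for orbit sets on $\partial X_\Omega$ that underlies Lemmas \ref{lemma:indexclassification} and \ref{lemma:echcapacities}, and then argue by a minimality/monotonicity principle. First I would recall that, for a convex toric domain defined by a nice function $f$, the ECH index $I$ restricted to orbit sets is an absolute $\mathbb{Z}$-grading taking only nonnegative even values on the relevant generators (as exhibited in Lemma \ref{lemma:indexclassification} for $I\le 4$), so the condition $I(\alpha)\ge 5$ is the same as $I(\alpha)\ge 6$, i.e. $I(\alpha)=2k$ for some $k\ge 3$. By Lemma \ref{lemma:echcapacities}, $c_k^{\mathrm{ECH}}(X_\Omega)=\min\{\mathcal{A}(\alpha)\mid I(\alpha)=2k\}$, and since the ECH capacities are nondecreasing in $k$, we get $\mathcal{A}(\alpha)\ge c_k^{\mathrm{ECH}}(X_\Omega)\ge c_3^{\mathrm{ECH}}(X_\Omega)\ge c_2^{\mathrm{ECH}}(X_\Omega)$ for any $\alpha$ with $I(\alpha)=2k$, $k\ge 3$. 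The only gap is the case $I(\alpha)=5$, which I would handle by noting that on $\partial X_\Omega$ with the nice perturbed contact form there are in fact no orbit sets of odd ECH index $5$ below the relevant action window — or, more robustly, by showing directly that if $I(\alpha)$ is odd then $\alpha$ must contain a hyperbolic generator $h_{p,q}$ and removing it (passing to a nearby even-index orbit set) only decreases the action while landing at index $\ge 4$, hence action $\ge c_2^{\mathrm{ECH}}(X_\Omega)$ by Lemma \ref{lemma:echcapacities} applied with $k=2$.

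Concretely, I would proceed as follows. Recall from the classification in \cite[\S 5]{hutchings2016beyond} the explicit combinatorial description of orbit set generators on $\partial X_\Omega$: a generator is a product $e_{0,1}^{m}\,e_{1,0}^{n}\,\prod_{(p,q)} e_{p,q}^{a_{p,q}} h_{p,q}^{b_{p,q}}$ with each $b_{p,q}\in\{0,1\}$, with ECH index computed by the usual lattice-point formula (the ``convex lattice path'' count) and action $\sum$ of the contributions $q x_{p,q}+p f(x_{p,q})$, $a$, $f(0)$ weighted by multiplicities. The key quantitative input is that replacing $h_{p,q}$ by $e_{p,q}$ changes the ECH index by $\pm 1$ while changing the action by an arbitrarily small amount (the two orbits are born from the same Morse–Bott torus $T_{p,q}$), and that the minimal action over all generators of a fixed index $2k$ is exactly $c_k^{\mathrm{ECH}}$. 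Given $\alpha$ with $I(\alpha)\ge 5$: if $I(\alpha)$ is even, $I(\alpha)\ge 6$, done by monotonicity of $c_k^{\mathrm{ECH}}$ as above; if $I(\alpha)$ is odd, then $\alpha$ contains some $h_{p,q}$, and the generator $\alpha'$ obtained by replacing that $h_{p,q}$ with $e_{p,q}$ has $I(\alpha')\in\{4,6,\dots\}$ with $I(\alpha')\ge 4$ and $\mathcal{A}(\alpha')\le \mathcal{A}(\alpha)+\delta$ for $\delta$ as small as we like; hence $\mathcal{A}(\alpha)\ge \mathcal{A}(\alpha')-\delta\ge c_2^{\mathrm{ECH}}(X_\Omega)-\delta$, and since $\delta$ is arbitrary (and $\mathcal{A}(\alpha)$, $c_2^{\mathrm{ECH}}$ are fixed once the perturbation is chosen fine enough), $\mathcal{A}(\alpha)\ge c_2^{\mathrm{ECH}}(X_\Omega)$.

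The main obstacle is getting the index-$5$ (odd-index) case exactly right: one has to be careful that replacing a hyperbolic orbit by the corresponding elliptic one indeed lands at ECH index $\ge 4$ and not at index $2$ or $0$ (where the action estimate would be too weak), and that the action only goes down by a controllably small amount rather than jumping. The cleanest way to nail this down is to invoke the explicit index formula in \cite[\S 5]{hutchings2016beyond}: for a convex toric domain the index of a generator is determined by an associated convex lattice path, and the parity of the index is governed by the number of hyperbolic factors, with each hyperbolic factor $h_{p,q}$ contributing exactly one less to the index than the corresponding $e_{p,q}$ would; since a generator with at least one hyperbolic factor and total index $\ge 5$ still has index $\ge 4$ after the swap, the estimate closes. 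Alternatively — and this may be the slicker route — one can simply observe that the ECH differential decreases both index (by $1$) and action, so an odd-index generator of index $5$ is either a $\partial$-image or maps under $\partial$ to an index-$4$ generator, and in either case one of the two adjacent index-$4$ generators $e_{0,1}^2$, $e_{1,1}$, $e_{1,0}^2$ has action $\le \mathcal{A}(\alpha)$ up to the perturbation error, and all of these have action $\ge c_2^{\mathrm{ECH}}(X_\Omega)$ by Lemma \ref{lemma:echcapacities}.
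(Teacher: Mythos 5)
The paper offers no proof of Lemma \ref{lemma:higherindexorbits} --- it is asserted with ``one can deduce'' after Lemmas \ref{lemma:indexclassification} and \ref{lemma:echcapacities} --- so there is no proof in the text to compare against, only an implied route. Your main argument is that natural route and is essentially correct: split on the parity of $I(\alpha)$; for even $I(\alpha)=2k$ with $k\ge 3$, Lemma \ref{lemma:echcapacities} gives $\mathcal{A}(\alpha)\ge c_k^{\mathrm{ECH}}\ge c_2^{\mathrm{ECH}}$ by monotonicity of the ECH capacities; for odd $I(\alpha)\ge 5$, the parity of the ECH index forces a positive hyperbolic factor $h_{p,q}$ in $\alpha$, and replacing it by $e_{p,q}$ changes the index by exactly one (landing at an even index $\ge 4$) while changing the action by an amount that vanishes as the Morse--Bott perturbation is refined. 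The lemma, like the rest of Proposition \ref{proposition:nobadbreaking}, is to be read modulo such an arbitrarily small perturbation error --- note the paper does the exact same thing when it justifies $c<\mathcal{A}(h_{1,1})$ there via $\mathcal{A}(h_{1,1})\approx\mathcal{A}(e_{1,1})$ ``for a small enough perturbation.'' Your minor uncertainty about whether the index goes up or down by one under the swap is harmless, since either way the result has even index $\ge 4$.

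The ``slicker route'' you sketch at the end, however, has a genuine gap. If $\alpha$ has index $5$ and $\partial\alpha\ne 0$, then some index-$4$ orbit set $\beta$ appears in $\partial\alpha$ with $\mathcal{A}(\beta)<\mathcal{A}(\alpha)$, and $\mathcal{A}(\beta)\ge c_2^{\mathrm{ECH}}$ finishes the argument. But if $\partial\alpha=0$, the vanishing of $ECH$ of $S^3$ in odd degree only says that $\alpha$ lies in the image of $\partial$, i.e.\ $\alpha$ appears in $\partial\gamma$ for some index-$6$ generator $\gamma$; that yields $\mathcal{A}(\alpha)<\mathcal{A}(\gamma)$, an upper bound, which gives no lower bound on $\mathcal{A}(\alpha)$ whatsoever. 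There is no a priori reason an index-$5$ generator has nonvanishing differential, so this alternative does not close. Stick with the combinatorial swap argument.
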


\section{Ruling out breaking}
\label{section:rullingout}
\subsection{Completed symplectic cobordisms}
Let $(Y_{\pm},\lambda_{\pm})$ be closed contact $3$--dimensional manifolds. A \textit{compact symplectic cobordism} from $(Y_{+},\lambda_{+})$ to $(Y_{-},\lambda_{-})$ is a compact symplectic manifold $(W,\omega)$ with boundary $\partial W = -Y_{-}\sqcup Y_{+}$ such that $\omega|_{Y_{\pm}}=d\lambda_{\pm}$.  

Given a compact symplectic cobordism $(W,\omega)$, one can find neighborhoods $N_{-}$ of $Y_{-}$ and $N_{+}$ of $Y_{+}$ in $W$, and symplectomorphisms
\[ (N_{-}, \omega) \to ([0,\epsilon)\times Y_{-}, d(e^s\lambda_{-}))\] 
and
\[ (N_{+}, \omega) \to ((-\epsilon,0]\times Y_{+}, d(e^s\lambda_{+})),\]
where $s$ denotes the coordinate on $[0,\epsilon)$ and $(-\epsilon,0]$.
Using these identifications, we can complete the compact symplectic cobordism $(W,\omega)$ by adding cylindrical ends $(-\infty,0]\times  Y_{-}$ and $[0,\infty) \times Y_{+}$ to obtain the \textit{completed symplectic cobordism}
\[ \widehat{W} = [0,\infty)\times Y_{+} \cup_{Y_{+}} W \cup_{Y_{-}} (-\infty, 0]\times Y_{-}.\]

In accordance with \cite{bourgeois2003compactness}, we restrict the class of almost complex structures on a completed cobordism $\widehat{W}$ as follows. An almost complex structure $J$ on a completed symplectic cobordism $\widehat{W}$ as above is called \textit{compatible} (in \cite{bourgeois2003compactness}, the authors use the term \textit{adjusted}) if:
\begin{enumerate}
\item[$\cdot$] On $[0,\infty)\times Y_{+}$ and $(-\infty, 0]\times Y_{-}$, the almost complex structure $J$ is $\mathbb{R}$--invariant, maps $\partial_s$ (the $\mathbb{R}$ direction) to $R_{\lambda_{\pm}}$, and maps $\xi_{\pm}$ to itself compatibly with $d\lambda_{\pm}$.
\item[$\cdot$] On the compact symplectic cobordism $W$, the almost complex structure $J$ is tamed by $\omega$.
\end{enumerate}
Call $\mathcal{J}(\widehat{W})$ the set of all such compatible almost complex structures on $\widehat{W}$.

Choose a compatible almost complex structure $J\in\mathcal{J}(\widehat{W})$ on $\widehat{W}$ and a let $(\Sigma, j)$ be a compact Riemann surface. We will consider curves $u:(\dot{\Sigma}=\Sigma \setminus \{x_1, \dots, x_k, y_1,\dots, y_l\},j)\to (W,J)$ that are $J$--holomorphic, i.e. $du\circ j = J \circ du$, and have $k$ positive ends at $\Gamma^{+} = (\gamma^{+}_1,\dots,\gamma^{+}_k)$ corresponding to the punctures $(x_1,\dots, x_k)$, and $l$ negative ends at $\Gamma^{-}=(\gamma^{-}_1,\dots,\gamma^{-}_l)$ corresponding to the punctures $(y_1,\dots, y_l)$.  Denote by $\mathcal{M}_J(\Gamma^{+}, \Gamma^{-})$ the space of such $J$--holomorphic curves $u$ modulo reparametrizations of the domain $\dot{\Sigma}$.

Recall that a \textit{positive end} of $u$ at $\gamma$ means a puncture, near which, $u$ is asymptotic to $\mathbb{R}\times \gamma$. More specifically, that means there is a choice of coordinates $(s,t)\in [0,\infty)\times\mathbb{R}/T\mathbb{Z}$ on a neighborhood of the puncture, with $j(\partial_s)=\partial_t$ and such that $\lim_{s\to\infty}\pi_{\mathbb{R}}(u(s,t))= \infty$ and $\lim_{s\to\infty}\pi_{Y_{+}}(u(s,\cdot))= \gamma$. Similarly, a \textit{negative end} at $\gamma$ is a puncture, near which, $u$ is asymptotic to $\mathbb{R}\times \gamma$. More specifically, that means there is a choice of coordinates $(s,t)\in (-\infty,0]\times\mathbb{R}/T\mathbb{Z}$ on a neighborhood of the puncture, with $j(\partial_s)=\partial_t$ and such that $\lim_{s\to\infty}\pi_{\mathbb{R}}(u(s,t))= \infty$ and $\lim_{s\to\infty}\pi_{Y_{-}}(u(s,\cdot))= \gamma$. 

Given a $J$--holomorphic curve $u$ as above, define the \textit{Fredholm index} of $u$ by
\begin{equation}
\label{equation:Fredholmindex}
\mathrm{ind}(u) = -\chi(u)+2c_{\tau}(u)+\sum_{i=1}^{k} CZ_{\tau}(\gamma^{+}_i)-\sum_{j=1}^l CZ_{\tau}(\gamma^{-}_j), 
\end{equation}
where $\tau$ is a trivialization of $\xi$ over $\gamma^{\pm}_i$ that is symplectic with respect to $d\lambda$, $\chi(u)$ is the Euler characteristic of $\dot{\Sigma}$, $c_{\tau}(u):=c_1(u^{*}\xi,\tau)$ denotes the relative first Chern class, and $CZ_{\tau}(\gamma^{\pm}_i)$ is the Conley--Zehnder index with respect to $\tau$, as before. 
The significance of the Fredholm index is that for a generic choice of compatible almost complex structure $J$ and for a somewhere--injective $J$--holomorphic curve $u$, the moduli space $\mathcal{M}_J(\Gamma^{+}, \Gamma^{-})$ is a manifold of dimension $\mathrm{ind}(u)$ near $u$. See \cite[\S 6]{wendl2016lectures} for more details.

\subsection{Moduli spaces}

Let $X_{\Omega_1}$ and $X_{\Omega_2}$ be two convex toric domains defined by nice functions $f_1:[0,a]\to \mathbb{R}_{\geq 0}$ and $f_2:[0,a]\to \mathbb{R}_{\geq 0}$, respectively. Also, let $\varphi:X_{\Omega_1} \to X_{\Omega_2}$ be a symplectic embedding. The manifold $W_{\varphi} :=X_{\Omega_2}\setminus \mathrm{int} (X_{\Omega_1})$ is a compact symplectic cobordism from $(\partial X_{\Omega_2},\lambda_{\mathrm{std}}|_{\partial X_{\Omega_2}})$ to $(\partial X_{\Omega_1},\lambda_{\mathrm{std}}|_{\partial X_{\Omega_1}})$, where $\lambda_{\mathrm{std}}$ denotes the standard Liouville form on $\mathbb{R}^4$. 

Following the explanation in Remark \ref{remark:perturbcontactform}, perturb the boundary components $\partial X_{\Omega_1}$ and $\partial X_{\Omega_2}$ of $W_{\varphi}$ in such a way that the Liouville form $\lambda_{\mathrm{std}}$ restricts to nondegenerate contact forms $\lambda_1$ and $\lambda_2$  on $\partial X_{\Omega_1}$ and $\partial X_{\Omega_2}$, respectively.  Add cylindrical ends to $W_{\varphi}$ and call $\widehat{W}_{\varphi}$ the completed symplectic cobordism.

To clean up notation, call $\gamma_a$ the $e_{0,1}$ embedded Reeb orbit on $\partial X_{\Omega_1}$, and call $\gamma_c$ the $e_{0,1}$ embedded Reeb orbit on $\partial X_{\Omega_2}$. Recall that $\mathcal{A}(\gamma_a)=a$ and $\mathcal{A}(\gamma_c)=c$.

For a given almost complex structure $J\in\mathcal{J}(\widehat{W}_{\varphi})$, define $\mathcal{M}_J({\varphi})$ to be the moduli space of $J$--holomorphic cylinders $u:(\mathbb{R}\times S^1,j)\to (\widehat{W}_{\varphi},J)$ such that $u$ has a positive end at $\gamma_c$ and a negative end at $\gamma_a$, modulo translation and rotations of the domain $\mathbb{R}\times S^1$. 

All such $J$--holomorphic cylinders have Fredholm index $\mathrm{ind}(u)=0$ and the automatic transversality result in Lemma \ref{lemma:automatictransversality} below implies that $\mathcal{M}_J({\varphi})$ is a $0$--dimensional manifold for any choice of $J$. Moreover, $\mathcal{M}_J({\varphi})$ can be compactified with broken holomorphic curves using the SFT compactness theorem, \cite[Theorem 10.2]{bourgeois2003compactness}, since all the $J$--holomorphic cylinders in $\mathcal{M}_J({\varphi})$ have the same asymptotics.

\subsection{Automatic transversality}
A much more general automatic transversality result than the one we need to use is proven by Wendl in \cite{wendl2008automatic}. In the language employed in this paper, the particular case that we need to use is stated as follows. See also \cite[Lemma 4.1]{hutchings2014cylindrical} for a very similar statement and proof in the case of symplectizations.

\begin{lemma} 
\label{lemma:automatictransversality}
Let $\widehat{W}$ be a completed symplectic cobordism and let $u:\dot{\Sigma}\to \widehat{W}$ be an immersed $J$--holomorphic curve that has asymptotic ends to Reeb orbits. Let $N$ denote the normal bundle to $u$ in $X$ and 
\[ D_u : L^2(\Sigma, N) \to L^2(\Sigma, T^{0,1}\mathbb{C} \otimes N) \]
denote the normal linearized operator of $u$. Also let $h^{+}(u)$ denote the number of ends of $u$ at positive hyperbolic orbits. If 
\[2g(\Sigma)-2+h^{+}(u)<\mathrm{ind}(u),\]
then $D_u$ is surjective, i.e. the moduli space of $J$--holomorphic curves near $u$ is a manifold that is cut out transversely and has dimension $\mathrm{ind}(u)$. 
\end{lemma}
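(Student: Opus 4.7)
The plan is to adapt the automatic transversality argument of Hofer-Wysocki-Zehnder, as generalized by Wendl in \cite{wendl2008automatic}, to this setting. The structural input that makes automatic transversality available in dimension four is that the normal bundle $N\to\Sigma$ to an immersed pseudoholomorphic curve in a $4$-dimensional cobordism is a complex line bundle, and that the normal linearization $D_u$ is a real-linear Cauchy-Riemann type operator on $N$ whose Fredholm index equals $\mathrm{ind}(u)$.

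The main step is to show that the formal $L^{2}$-adjoint $D_u^{*}$, itself a Cauchy-Riemann type operator on a related complex line bundle (with sign-reversed asymptotic operators at the punctures), has trivial kernel under the stated hypothesis; by Fredholm duality this is equivalent to the surjectivity of $D_u$. The decisive analytic input is the Carleman similarity principle, which implies that locally any section in $\ker D_u^{*}$ agrees with a holomorphic function multiplied by a nonvanishing continuous factor. Consequently all zeros of a nontrivial $\eta\in\ker D_u^{*}$ are isolated and count with strictly positive local degree, so the algebraic zero count $Z(\eta)$ is nonnegative, with equality only when $\eta$ is nowhere vanishing.

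Next I would compute $Z(\eta)$ as a relative Chern number. Fixing trivializations $\tau$ of $\xi$ along each asymptotic Reeb orbit of $u$ and extending them to trivializations of the dual normal line bundle near each puncture using the exponential decay of $\eta$, one expresses $Z(\eta)$ as a relative first Chern number plus a sum of asymptotic winding contributions at each end. By the Hofer-Wysocki-Zehnder spectral analysis, each winding contribution is constrained by the spectrum of the corresponding asymptotic operator, with the extremal bounds controlled by the Conley-Zehnder indices. A careful case analysis according to the parity of $\mathrm{CZ}_{\tau}(\gamma_i)$, which distinguishes positive hyperbolic orbits from elliptic and negative hyperbolic ones, combined with the Fredholm index formula (\ref{equation:Fredholmindex}), yields after standard bookkeeping the inequality
\[ 2Z(\eta) \;\leq\; 2g(\Sigma)-2+h^{+}(u)-\mathrm{ind}(u). \]

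Under the hypothesis $2g(\Sigma)-2+h^{+}(u)<\mathrm{ind}(u)$ the right-hand side is strictly negative, contradicting $Z(\eta)\geq 0$. Hence $\ker D_u^{*}=0$, so $D_u$ is surjective and the implicit function theorem for Fredholm maps identifies the moduli space of $J$-holomorphic curves near $u$ with a smooth manifold of dimension $\mathrm{ind}(u)$. The main technical obstacle, and the source of the $h^{+}(u)$ term in the numerical hypothesis, is the careful winding and Chern-number accounting at the various types of punctures; this is carried out in full in \cite{wendl2008automatic} and, in the closely related symplectization setting, in \cite{hutchings2014cylindrical}, so in the proof I would cite those references and merely verify that our setup---an immersed pseudoholomorphic curve in the completed cobordism $\widehat{W}_{\varphi}$ with asymptotic ends at nondegenerate Reeb orbits---satisfies their hypotheses.
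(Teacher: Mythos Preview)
The paper does not actually prove this lemma: it is stated as a citation of the general result in \cite{wendl2008automatic}, with a pointer to \cite[Lemma 4.1]{hutchings2014cylindrical} for the symplectization case, and no argument is given beyond those references. Your proposal correctly sketches the standard Wendl/Hofer--Wysocki--Zehnder argument underlying those citations---reducing surjectivity of $D_u$ to triviality of $\ker D_u^{*}$, then using the Carleman similarity principle and asymptotic winding bounds to derive a contradiction from the numerical hypothesis---so in substance you are supplying the proof that the paper only cites, and your final remark that one need only verify the hypotheses of \cite{wendl2008automatic} is exactly what the paper does.
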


Note that there are no genericity assumptions on the almost complex structure $J$ in Lemma \ref{lemma:automatictransversality}. Also, the result applies to the $J$--holomorphic cylinders in $\mathcal{M}_J({\varphi})$ since they have ends only at elliptic Reeb orbits and the adjunction formula introduced below in (\ref{equation:adjunctionformula}) implies that they are embedded. Hence $\mathcal{M}_J({\varphi})$ is cut out transversely, for any choice of compatible almost complex structure $J$. 

\subsection{Ruling out breaking}

In this section, we study the possible boundary of the union $\sqcup_{J\in\mathfrak{J}}\mathcal{M}_J({\varphi})$, where $\mathfrak{J}$ is a smooth parametrized family of compatible almost complex structures. We prove that, assuming the bounds in the hypothesis of Theorem \ref{theorem:maintheorem}, a sequence of cylinders in $\sqcup_{J\in\mathfrak{J}}\mathcal{M}_J({\varphi})$ cannot converge to a broken holomorphic building with multiple levels.

\begin{proposition} 
\label{proposition:nobadbreaking}
Assume $X_{\Omega_1}$ and $X_{\Omega_2}$ are convex toric domains satisfying the bounds in the hypothesis of Theorem \ref{theorem:maintheorem}. Let $\{\varphi_i \in \sy(X_{\Omega_1},X_{\Omega_2})\}_{i\geq 1}$ be a sequence of symplectic embeddings, $C^0$--converging to $\varphi_0\in \sy(X_{\Omega_1},X_{\Omega_2})\}$. Let $\{J_i\in \mathcal{J}(\widehat{W}_{\varphi_{i}})\}_{i \geq 1}$ be a sequence of compatible almost complex structures converging to $J_0\in \mathcal{J}(\widehat{W}_{\varphi_{0}})$. Let $u_i\in\mathcal{M}_{J_i}(\varphi_i)$. Then the sequence $\{u_i\}_{i\geq 1}$ cannot converge in the sense of \cite{bourgeois2003compactness} to a $J_0$--holomorphic building with more than one level.
\end{proposition}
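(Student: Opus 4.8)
The plan is to use an action/ECH-index bookkeeping argument to show that any limiting holomorphic building must in fact be a single cylinder. Suppose for contradiction that the sequence $\{u_i\}$ converges to a building $u_\infty$ with at least two levels. The building lives in the broken cobordism obtained from $\widehat{W}_{\varphi_0}$ by possibly inserting symplectization levels $\mathbb{R}\times \partial X_{\Omega_2}$ on top and $\mathbb{R}\times\partial X_{\Omega_1}$ on the bottom; the total building has one positive end at $\gamma_c$ (so action $c$) and one negative end at $\gamma_a$ (so action $a$), and the topological type of the limit (genus $0$, two punctures, total homology class) is constrained by the convergence theorem of \cite{bourgeois2003compactness}. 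Since action is nonincreasing as one goes down a building and the intermediate orbit sets $\alpha$ at the breaking levels satisfy $a \le \mathcal{A}(\alpha) \le c$, the first step is to enumerate which orbit sets can possibly appear at a breaking level: by hypothesis $c = c_1^{\mathrm{ECH}}(X_{\Omega_2}) < c_2^{\mathrm{ECH}}(X_{\Omega_1})$, and on $\partial X_{\Omega_1}$ the orbit $\gamma_a = e_{0,1}$ has action $a$, so any orbit set of action in $[a,c)$ on $\partial X_{\Omega_1}$ has ECH index at most $4$ by Lemma~\ref{lemma:higherindexorbits} (contrapositive: index $\ge 5$ forces action $\ge c_2^{\mathrm{ECH}}(X_{\Omega_1}) > c$). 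So the list in Lemma~\ref{lemma:indexclassification} applies, and the only candidate orbit sets appearing at a breaking level are among $\emptyset$, $e_{0,1}$, $e_{1,0}$, $h_{1,1}$, $e_{0,1}^2$, $e_{1,1}$, $e_{1,0}^2$ — with the additional restriction that $f_1(0) > c$ rules out $e_{1,0}$ (action $f_1(0) > c$) and that the homology class of a cylinder forces the intermediate orbit set to have total multiplicity $1$ in the relevant generator, i.e. it must be $e_{0,1}$ on either boundary, namely $\gamma_a$ or $\gamma_c$.

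The second step is the index/transversality balance. The total Fredholm index of the limiting building equals $\mathrm{ind}(u_i) = 0$ (it is additive over levels in the limit, using that the relevant Conley--Zehnder contributions at the breaking orbits cancel). Each nonconstant level is a genuine curve in a cobordism or symplectization with ends only at the orbits listed above, all of which are elliptic except $h_{1,1}$ — and $h_{1,1}$ cannot appear because a cylinder cannot break along $h_{1,1}$ for homology/action reasons (the intermediate orbit set between the two pieces must be $\gamma_a$ or $\gamma_c$, and $h_{1,1}$ has the wrong homology class). Thus every level is a cylinder (or a branched cover of a trivial cylinder, or a trivial cylinder) with ends at elliptic orbits; Lemma~\ref{lemma:automatictransversality} applies to each nontrivial level with $h^+ = 0$, and automatic transversality forces each nontrivial level to have nonnegative index. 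Any level that is a trivial cylinder over $\gamma_a$ or $\gamma_c$, or a branched cover thereof, has strictly positive index if branched, or contributes $0$; combining with additivity of the index and the constraint that the total is $0$, I expect to conclude that there is exactly one nontrivial level of index $0$ and every other level is a trivial cylinder — but a building with a trivial-cylinder level and one other level is identified with the one-level building, contradicting the assumption of "more than one level." The computation that the connecting orbit must be $\gamma_a$ (at the bottom) or $\gamma_c$ (at the top) and that no curve of negative or zero "extra" index can be sandwiched in between is where the hypotheses $a < c < f_1(0) < f_2(0)$ and $c_1^{\mathrm{ECH}}(X_{\Omega_2}) < c_2^{\mathrm{ECH}}(X_{\Omega_1})$ all get used.

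The main obstacle, I expect, is carefully handling the possibility of multiply-covered components and trivial-cylinder levels in the SFT limit: the compactness theorem allows levels that are disconnected unions of a nontrivial curve together with trivial cylinders, and branched covers of trivial cylinders can have positive index, so one must check that the genus-zero, two-end constraint together with the homology class of the class $[u_i]$ in $H_2(\widehat{W}_{\varphi_0},\gamma_c,\gamma_a)$ genuinely precludes "hidden" breaking of the form cylinder-over-$\gamma_c$ glued to a real cobordism curve glued to cylinder-over-$\gamma_a$. The resolution should be that, because the positive and negative asymptotic orbits of $u_i$ are simple with multiplicity one, the only orbit set available at a breaking level with the correct homology class is the simple orbit $\gamma_c$ or $\gamma_a$ with multiplicity one; a nontrivial symplectization cylinder from $\gamma_c$ to $\gamma_c$ (or $\gamma_a$ to $\gamma_a$) would have positive action defect, which is impossible, so such levels must be trivial cylinders and hence do not count as genuine levels. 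Once this is pinned down, the contradiction is immediate and Proposition~\ref{proposition:nobadbreaking} follows.
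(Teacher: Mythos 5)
Your overall strategy is the same as the paper's: show the breaking orbit set must lie on $\partial X_{\Omega_1}$ and have action in $[a,c]$, use Lemma~\ref{lemma:higherindexorbits} to reduce to the ECH index~$\le 4$ list of Lemma~\ref{lemma:indexclassification}, and then eliminate all candidates but $e_{0,1}=\gamma_a$, after which the would-be symplectization level must be a trivial cylinder. However, there is one concrete error and one place where your reasoning drifts from what actually does the work.

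The error: you write that $h_{1,1}$ is excluded because it ``has the wrong homology class.'' This cannot be right --- $\partial X_{\Omega_1}\cong S^3$, so $H_1=0$ and every Reeb orbit is nullhomologous; homology gives no constraint whatsoever on which orbit sets can appear at a breaking level, nor does it force the ``total multiplicity~$1$'' you invoke. The correct mechanism, and the one the paper uses, is action: for a small enough perturbation, $\mathcal{A}(h_{1,1})\approx\mathcal{A}(e_{1,1})$, and the hypothesis $c_1^{\mathrm{ECH}}(X_{\Omega_2})<c_2^{\mathrm{ECH}}(X_{\Omega_1})$ unpacks (via Proposition~\ref{proposition:echcapacities}) to $c<\min\bigl(2a,\mathcal{A}(e_{1,1}),2f_1(0)\bigr)$, which simultaneously rules out $e_{0,1}^2$, $e_{1,1}$, $e_{1,0}^2$, and $h_{1,1}$; and $f_1(0)>c$ rules out $e_{1,0}$. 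None of this uses topology. You should replace the vague ``homology/action reasons'' paragraph with these explicit action inequalities --- they are the place where the central hypothesis $c_1^{\mathrm{ECH}}(X_{\Omega_2})<c_2^{\mathrm{ECH}}(X_{\Omega_1})$ is actually spent, and as written your proposal never cashes it in.

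For the final step you argue via zero $d\lambda$-energy (a symplectization cylinder from $\gamma_a$ to itself has zero action defect, hence is trivial), which is a valid alternative; the paper instead observes that a nontrivial Fredholm-index-$0$ cylinder in a symplectization is killed by automatic transversality together with the free $\mathbb{R}$-action. Either route is fine, and your worry about multiply covered or trivial-cylinder levels is resolved exactly as you suspected: once the action bounds force the breaking orbit set to be the simple orbit $\gamma_a$ with multiplicity one, branched covers and higher-multiplicity orbit sets are already off the table, so the only thing left to exclude is a nontrivial symplectization cylinder with both ends at $\gamma_a$.
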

\begin{proof}
In general, if there exists a $J$--holomorphic curve from the orbit set $\alpha$ to the orbit set $\beta$, then $\mathcal{A}(\alpha)\geq \mathcal{A}(\beta)$. Assume that, in the limit, the cylinders $u_i$ break into a $J_0$--holomorphic building $u_0=(v_1,v_2,\dots, v_l)$. Assume that $\alpha_{l}$ is the orbit set at which the level $v_l$ has negative ends. Then $\mathcal{A}(\alpha_l)\in [a,c]$. Note first that $c$ is the lowest action of an orbit set in $\partial X_{\Omega_2}$. This means that $v_1$ lives in the cobordism level. Secondly, the assumption $c_1^{\mathrm{ECH}}(X_{\Omega_2})< c_2^{\mathrm{ECH}}(X_{\Omega_1})$  translates to
\[ c<\min(2a, \mathcal{A}(e_{1,1}), 2f_1(0))=\min(\mathcal{A}(\gamma_a^2), \mathcal{A}(e_{1,1}), \mathcal{A}(e_{1,0}^2)),\]
where $\gamma_a=e_{0,1}$, $e_{1,1}$, and $e_{1,0}$ are the Reeb orbits on $\partial X_{\Omega_1}$. Thirdly, for a small enough perturbation of $\partial X_{\Omega_1}$, we also have $c<\mathcal{A}(h_{1,1})$ since $\mathcal{A}(h_{1,1})$ is approximately $ \mathcal{A}(e_{1,1})$. Lastly, Lemma \ref{lemma:higherindexorbits} implies that all orbit sets $\alpha$ on $\partial X_{\Omega_1}$ with $I(\alpha)\geq 5$ satisfy $c<\mathcal{A}(\alpha)$. 

Using the classification by ECH index in Lemma \ref{lemma:indexclassification}, together with the action inequalities above, we conclude that the only orbit set through which the cylinders $u_i$ could hypothetically break is $\alpha=e_{0,1}$.  This means that the only broken building we still have to rule out is $u_0=(v_1,v_2)$, where $v_1$ is a Fredholm index $0$ cylinder from $\gamma_c$ to $e_{1,0}$  in the cobordism level and  $v_2$ is a Fredholm index $0$ cylinder from $e_{1,0}$ to $\gamma_a=e_{0,1}$ in the lower symplectization level. The nontrivial cylinder $v_2$ is a Fredholm index $0$ $J_0$--holomorphic cylinder in a symplectization, and so, by automatic transversality, it cannot appear.

\end{proof}

Proposition \ref{proposition:nobadbreaking} together with the automatic transversality from Lemma \ref{lemma:automatictransversality}, and SFT compactness, \cite[Theorem 10.2]{bourgeois2003compactness}, imply that $\mathcal{M}_{J}(\varphi)$ is a compact $0$--dimensional manifold, i.e. a finite set of points.

\section{Proof of main theorem}
\label{section:proof}

\subsection{Non-emptiness of moduli spaces}
\label{subsection:nonemptiness}

First, we prove the nonemptiness of $\mathcal{M}_{\widehat{J}}(\varphi_0)$ for the inclusion map $\varphi_0:X_{\Omega_1}\to X_{\Omega_2}$ and a certain compatible almost complex structure $\widehat{J}$. 
\begin{proposition} 
\label{proposition:nonempty}
 There exists $\widehat{J} \in \mathcal{J}(\widehat{W}_{\varphi_0})$ such that the moduli space $\mathcal{M}_{\widehat{J}}(\varphi_0)$ is nonempty.
\end{proposition}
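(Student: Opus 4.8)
The plan is to exhibit a single explicit holomorphic cylinder in a carefully chosen completed cobordism. Since $\varphi_0$ is the inclusion $X_{\Omega_1}\subset X_{\Omega_2}$, the cobordism $W_{\varphi_0}=X_{\Omega_2}\setminus\mathrm{int}(X_{\Omega_1})$ is a genuine toric region, and the key observation is that the complex line $L=\{z_1=0\}$ meets this region in an annular piece. Concretely, $L\cap X_{\Omega_2}$ is the disk $B^2(f_2(0))$ in the $z_2$--plane and $L\cap X_{\Omega_1}$ is the disk $B^2(f_1(0))$; since $f_1(0)<f_2(0)$ by hypothesis, the intersection $L\cap W_{\varphi_0}$ is a closed annulus whose two boundary circles lie on $\partial X_{\Omega_2}$ and $\partial X_{\Omega_1}$ respectively. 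After the perturbation of the two boundary hypersurfaces described in Remark \ref{remark:perturbcontactform}, the outer boundary circle is (isotopic to) the embedded Reeb orbit $\gamma_c=e_{0,1}$ on $\partial X_{\Omega_2}$ and the inner boundary circle is $\gamma_a=e_{0,1}$ on $\partial X_{\Omega_1}$ — note that $e_{0,1}$ is the orbit living in the $\{z_2=0\}$ plane, so I must instead use the line $\{z_2=0\}$; let me restate: take $L=\{z_2=0\}$, so $L\cap X_{\Omega_2}=B^2(c)$, $L\cap X_{\Omega_1}=B^2(a)$, and since $a<c$ the intersection $L\cap W_{\varphi_0}$ is an annulus bounded by $\gamma_c$ on the outside and $\gamma_a$ on the inside.

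Next I would build the almost complex structure $\widehat{J}$ so that this annulus (suitably completed) is $\widehat{J}$--holomorphic. On the compact cobordism $W_{\varphi_0}$ one chooses $\widehat{J}$ to preserve the tangent planes to $L$ wherever they are defined and to be tamed by $\omega_{\mathrm{std}}$; on $\mathbb{C}^2$ the integrable structure $i$ already does this and is tamed by $\omega_{\mathrm{std}}$, so one can take $\widehat{J}$ to agree with $i$ near $L$ and interpolate away from it. On the cylindrical ends one needs $\widehat{J}$ to be $\mathbb{R}$--invariant and compatible in the sense of the excerpt; this is arranged by choosing $\widehat{J}$ near each boundary to be the standard such extension for which the trivial cylinder $\mathbb{R}\times e_{0,1}$ over the orbit in the $\{z_2=0\}$ plane is holomorphic. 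Gluing the annulus $L\cap W_{\varphi_0}$ to these two trivial half-cylinders along the boundary orbits produces a properly embedded $\widehat{J}$--holomorphic cylinder $u_0:\mathbb{R}\times S^1\to\widehat{W}_{\varphi_0}$ with a positive end at $\gamma_c$ and a negative end at $\gamma_a$, i.e. $[u_0]\in\mathcal{M}_{\widehat{J}}(\varphi_0)$, so the moduli space is nonempty.

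The main obstacle is matching the perturbation of the contact form with the holomorphic model near the ends: after perturbing $\partial X_{\Omega_1}$ and $\partial X_{\Omega_2}$ to be nondegenerate, the region $L\cap W_{\varphi_0}$ is only an annulus up to a $C^0$--small correction, and one must check that $\widehat{J}$ can still be chosen compatible on the perturbed ends with $e_{0,1}$ (which is unaffected by the perturbation, per Remark \ref{remark:perturbcontactform}) remaining a Reeb orbit whose trivial cylinder is $\widehat{J}$--holomorphic, and that the asymptotics of $u_0$ genuinely converge to these orbits. A clean way to handle this is to perform the boundary perturbation supported away from the $\{z_2=0\}$ plane, so that a neighborhood of $L$ in $\widehat{W}_{\varphi_0}$ is literally standard and $u_0$ can be taken holomorphic for an honestly integrable $\widehat{J}$ there; then extend $\widehat{J}$ to all of $\widehat{W}_{\varphi_0}$ arbitrarily subject to compatibility. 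One should also record, for use in later subsections, that $u_0$ is embedded (it lies in a complex line) and has Fredholm index $0$, consistent with Lemma \ref{lemma:automatictransversality}.
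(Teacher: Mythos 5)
Your proposal is essentially the paper's proof: both realize the cylinder as the completion of the annulus $\{z_2=0\}\cap W_{\varphi_0}$ joining $\gamma_c$ to $\gamma_a$ (you correctly self-correct to $\{z_2=0\}$; for what it's worth the paper writes $S=W_{\varphi_0}\cap\{z_1=0\}$, which is a typo — the subsequent computation with $p=(z_1,0)\in S$ makes clear $\{z_2=0\}$ is meant), and both arrange the nondegenerate perturbation of $\partial X_{\Omega_i}$ to be supported away from this plane so the curve survives. The only real difference is how $\widehat{J}$ is made to preserve the tangent bundle of the annulus: the paper fixes $\widehat{J}$ on $S^1$-equivariant collars, extends the associated compatible metric, averages it over the $S^1$-action by rotation in $z_2$, and deduces preservation of $T\widehat{S}$ from a matrix computation with $S^1$-invariance, then parametrizes $\widehat{S}$ via the Uniformization Theorem; you instead propose a direct interpolation from $i$ near the annulus, which is fine in principle but leaves implicit both why a consistent compatible interpolation preserving $TL$ exists and why the resulting Riemann surface is biholomorphic to the standard cylinder.
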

\begin{proof}
We will construct a compatible almost complex structure $\widehat{J}$ that is invariant under the $S^1$--action by rotations in the $z_2$--plane and prove that an appropriate restriction of the $z_1$--plane is the $\widehat{J}$--holomorphic cylinder we are looking for. Our construction is similar to \cite[\S 5.2]{burkard2016first}. Whenever we say ``$S^1$--equivariant", we mean invariant under the $S^1$--action by rotations in the $z_2$--plane.

Recall that $\partial X_{\Omega_1}$ and $\partial X_{\Omega_2}$ are contact hypersurfaces in the compact symplectic cobordism $(W_{\varphi_0}, \omega_{\mathrm{std}}=d\lambda_{\mathrm{std}})$. Moreover, notice that they are $S^1$--equivariant. Using an $S^1$--equivariant version of the Moser trick, one can prove that there exist $S^1$--equivariant neighborhoods $N_1$ of $\partial X_{\Omega_1}$ and $N_2$ of $\partial X_{\Omega_2}$ in $W_{\varphi_0}$, and $S^1$--equivariant symplectomorphisms
\[ \psi_1: (N_{1}, \omega) \to ([0,\epsilon)\times \partial X_{\Omega_1}, d(e^s\lambda_1))\] 
and
\[ \psi_2: (N_{2}, \omega) \to ((-\epsilon,0]\times \partial X_{\Omega_2}, d(e^s\lambda_2)),\]
where $\lambda_i = \lambda_{\mathrm{std}}|_{\partial X_{\Omega_i}}$, and $s$ denotes the coordinate on $[0,\epsilon)$ and $(-\epsilon,0]$.

Choose almost complex structures $J_1$ on $([0,\frac{\epsilon}{3})\cup(\frac{2\epsilon}{3},\epsilon)) \times \partial X_{\Omega_1}$ and $J_2$ on $((-\epsilon,-\frac{2\epsilon}{3})\cup(-\frac{\epsilon}{3},0)) \times \partial X_{\Omega_2}$, that are $S^1$--equivariant and compatible with the cylindrical ends near the boundary of $W_{\varphi_0}$, and that pull back under $\psi_i$ to the standard complex structure on $\mathbb{C}^2$ near the interior of $W_{\varphi_0}$, i.e. $\psi_1^{*} (J_1|_{(\frac{2\epsilon}{3},\epsilon)\times \partial X_{\Omega_1}}) = i$ and $\psi_2^{*} (J_2|_{(-\epsilon,-\frac{2\epsilon}{3})\times \partial X_{\Omega_2}}) = i$. Define 
\begin{equation}
\label{equation:tildej}
\widehat{J}(p):=
\begin{cases}
\psi_1^{*} (J_1(\psi_1(p))), & p\in\psi_1^{-1}((0,\frac{\epsilon}{3})\cup(\frac{2\epsilon}{3},\epsilon)) \times \partial X_{\Omega_1}) \\
i, & p\in W_{\varphi_0}\setminus (N_1\cup N_2)\\
\psi_2^{*} (J_2(\psi_2(p))), & p\in\psi_2^{-1}((-\epsilon,-\frac{2\epsilon}{3})\cup(-\frac{\epsilon}{3},0) \times \partial X_{\Omega_2}).
\end{cases} 
\end{equation}
The compatibility of $\widehat{J}$ with the cylindrical ends near the boundary of the compact symplectic cobordism $W_{\varphi_0}$ makes it possible to extend $\widehat{J}$ to a compatible $S^1$--equivariant almost complex structure on the cylindrical ends of the completed symplectic cobordism $\widehat{W}_{\varphi_0}$. We still need to interpolate between the standard complex structure in the interior of $W_{\varphi_0}$ and the almost complex structure on the cylindrical ends.

Let $g(\cdot,\cdot):=\omega(\cdot, \widehat{J}\cdot)$ be the positive definite Riemannian metric defined by the compatibility of $\omega$ and $\widehat{J}$ and note that $g$ is $S^1$--equivariant. Extend the Riemannian metric $g$ to $W_{\varphi_0}$ and average the obtained extension over the $S^1$--action to obtain an $S^1$--equivariant Riemannian metric $\widehat{g}$ on $W_{\varphi_0}$. Note that $\widehat{g}=g$ wherever $g$ is defined since $g$ is $S^1$--equivariant. Define $\widehat{J}$ to be the unique compatible almost complex structure that satisfies $\widehat{g}(\cdot, \cdot)=\omega(\cdot, \widehat{J} \cdot)$ and note that that this definition extends the definition in (\ref{equation:tildej}), since $\widehat{g}=g$ wherever $g$ is defined. Note that since $\widehat{g}$ and $\omega_{\mathrm{std}}$ are $S^1$--equivariant, then $\widehat{J}$ is also $S^1$--equivariant.

Let $S:= W_{\varphi_0}\cap \{ z_1=0\}$. Note that $S$ is a closed annulus which we can complete by adding cylindrical ends to get 
\[ \widehat{S}:=(-\infty,0]\times \gamma_a \cup S \cup [0,\infty) \times \gamma_c.\] 

We will now show that $\widehat{J}$ being invariant under $S^1$--action in the $z_2$--plane implies that $\widehat{J}$ preserves the tangent space of $\widehat{S}$. Let $h_{\theta}(z_1,z_2):=(z_1,e^{i\theta}z_2)$, for $\theta\in[0,2 \pi]$. Knowing $\widehat{J}$ is invariant under the $S^1$--action in the $z_2$--plane implies that
\[ \widehat{J}_{h_{\theta}(p)} \circ d_ph_{\theta} = d_ph_{\theta} \circ \widehat{J}_{p}, \] 
for any $p\in W_{\varphi_0}$ and any $\theta\in [0,2 \pi]$. In the basis $\left \lbrace \frac{\partial}{\partial x_1}, \frac{\partial}{\partial y_1}, \frac{\partial}{\partial x_2},\frac{\partial}{\partial y_2} \right \rbrace$, this equality can be written in $2\times 2$ block matrix notation as,

\begin{equation}
\label{equation:invariantj}
\left(\begin{array}{cc}
A & B \\
C & D 
\end{array}\right)_{h_{\theta}(p)}
\left(\begin{array}{cc}
I & 0 \\
0 & R_{\theta} 
\end{array}\right) 
= 
\left(\begin{array}{cc}
I & 0 \\
0 & R_{\theta} 
\end{array}\right) 
\left(\begin{array}{cc}
A & B \\
C & D 
\end{array}\right)_{p},
\end{equation}
for any $p\in W_{\varphi_0}$ and any $\theta\in [0,2 \pi]$, and where $\widehat{J}_p=\left(\begin{array}{cc}
A & B \\
C & D 
\end{array}\right)_{p}$ 
is the almost complex structure in coordinates and 
$R_{\theta} = \left(\begin{array}{cc}
\cos \theta & -\sin \theta \\
\sin \theta &\cos \theta
\end{array}\right)$
is a rotation matrix. After carrying out the multiplications in (\ref{equation:invariantj}), we see that
\[ 
\left(\begin{array}{cc}
A_{h_{\theta}(p)} & B_{h_{\theta}(p)} R_{\theta}\\
C_{h_{\theta}(p)} & D_{h_{\theta}(p)} R_{\theta}
\end{array}\right) 
=
\left(\begin{array}{cc}
A_p & B_p \\
R_{\theta}C_p & R_{\theta} D_p 
\end{array}\right).
\]
Note that for $p=(z_1,0)$, $h_{\theta}(p)=p$, and so the above equality implies $B_p R_{\theta} = B_p$ for any $p\in S$ and $\theta \in [0, 2\pi]$. This implies $B_p=0$ and hence, $\widehat{J}$ preserves the tangent bundle of $S$. Moreover, by construction, $\widehat{J}$ preserves the tangent spaces on the cylindrical ends of $\widehat{S}$ and so $\widehat{J}$ preserves the tangent bundle of $\widehat{S}$.

Hence, $(\widehat{S},\widehat{J})$ is a Riemann surface which is diffeomorphic to a punctured plane. By the Uniformization theorem, $(\widehat{S},\widehat{J})$ is biholomorphically equivalent to either the punctured plane, the punctured disk, or an open annulus. Since $\widehat{J}$ is compatible with the infinite cylindrical ends of $\widehat{W}_{\varphi_0}$, $(\widehat{S},\widehat{J})$ must be biholomorphic to a punctured plane, and hence also biholomorphic to a cylinder. We conclude that there exists a $\widehat{J}$--holomorphic map $u:(\mathbb{R}\times S^1, j)\to (\widehat{W}_{\varphi_0},\widehat{J})$ with image $\widehat{S}$, and hence, $[u]\in \mathcal{M}_{\widehat{J}}(\varphi_0)$. 

Finally, note that the perturbation of the hypersurfaces $\partial X_{\Omega_i}$, for $i=1,2$, needed to make $\lambda_{\mathrm{std}}|_{\partial X_{\Omega_i}}$ nondegenerate, happens away from the $z_1$--plane and so the curve $[u]$ persists after the perturbation. 
\end{proof}

\begin{remark}
\label{remark:sameimage}
All the symplectic embeddings that form the loop considered in Theorem  \ref{theorem:maintheorem}  have the same image in $X_{\Omega_2}$, so $\widehat{W}_{\varphi_t} = \widehat{W}_{\varphi_0}$, for any $t\in[0,1]$. Hence the moduli space $\mathcal{M}_{\widehat{J}}(\varphi_t)$ contains the same $\widehat{J}$--holomorphic cylinders as $\mathcal{M}_{\widehat{J}}(\varphi_0)$. 
\end{remark}

\subsection{Counting the cylinders}
\label{subsection:uniqueness}
We next prove the uniqueness of the $J$--holomorphic cylinders using asymptotic analysis estimates. Let us begin by recalling the \textit{adjunction formula}:
\begin{lemma}
Let $u:\dot{\Sigma}\to X$ be a somewhere--injective $J$--holomorphic curve. Then $u$ has finitely many singularities, and 
\begin{equation}
\label{equation:adjunctionformula}
c_{\tau}(u) = \chi(u)+Q_{\tau}(u)+w_{\tau}(u)-2\delta(u)
\end{equation}
where $c_{\tau}(u)$ is the relative first Chern class as before (see \cite[\S 4.2]{hutchings2009embedded}), $\chi(u)$ is the Euler characteristic of the domain of $u$, $Q_{\tau}(u)$ is the relative self intersection number as before (see \cite[\S 4.2]{hutchings2009embedded}), $w_{\tau}(u)$ is the asymptotic writhe defined in \cite[\S 2.6]{hutchings2009embedded}, and $\delta(u)$ is a count of singularities of $u$ with positive integer weights. 
\end{lemma}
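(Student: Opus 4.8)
This is the relative adjunction formula of Hutchings, and the plan is to reprise the argument of \cite[\S 3]{hutchings2009embedded} (cf. also \cite[\S 3.4]{hutchings2014lecture}), adapted from symplectizations to the completed cobordism $\widehat{W}$. The whole proof amounts to computing, in two different ways, the algebraic intersection number of $u$ with a suitable pushed-off copy of itself, and comparing the two answers.

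\emph{Setting up.} Since $u$ is somewhere injective, positivity of intersections for pseudoholomorphic curves (the local statement is due to Micallef--White, packaged globally by McDuff) shows that $u$ has only finitely many non-immersed points and self-intersections, each carrying a well-defined positive integer multiplicity; $\delta(u)$ is the sum of these. Away from this finite set $u$ is an embedding with a normal bundle $N_u$, and the asymptotic convergence of $u$ to its limiting Reeb orbits (Hofer--Wysocki--Zehnder, Siefring) lets us extend $N_u$ over the punctures: near a puncture at $\gamma$ one has $u^{*}T\widehat{W}\cong(\mathbb{R}\partial_s\oplus\mathbb{R}R)\oplus\xi$ with $N_u\cong\xi$, so $\tau$ induces a trivialization of $N_u$ near the ends. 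Additivity of the relative first Chern class under $u^{*}T\widehat{W}\cong T\dot{\Sigma}\oplus N_u$, together with the triviality of $\mathbb{R}\partial_s\oplus\mathbb{R}R$ and the fact that the relative Euler number of $T\dot{\Sigma}$ with respect to the product trivialization at the ends equals $\chi(\dot{\Sigma})=\chi(u)$, gives
\[ c_{\tau}(u) = \chi(u) + c_1(N_u,\tau), \qquad\text{i.e.}\qquad c_1(N_u,\tau) = c_{\tau}(u)-\chi(u).\]

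\emph{Counting intersections two ways.} Pick a section $\nu$ of $N_u$ that is transverse to zero, avoids the singular points of $u$, and near each puncture is the constant section determined by $\tau$; let $u_{\nu}$ be the corresponding pushoff. On the one hand, the interior intersections of $u$ with $u_{\nu}$ consist of the transverse zeros of $\nu$, counted by $c_1(N_u,\tau)$, together with clusters near the non-immersed and double points of $u$ which, by the Micallef--White local models, contribute exactly $2\delta(u)$; hence $u\cdot u_{\nu} = c_1(N_u,\tau)+2\delta(u)$. On the other hand, by the definition of the relative self-intersection number via admissible representatives, combined with the asymptotic braid analysis at the punctures — the ends of $u$ form braids around the Reeb orbits whose total linking relative to $\tau$ is the asymptotic writhe — one gets $u\cdot u_{\nu} = Q_{\tau}(u) + w_{\tau}(u)$. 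Equating the two expressions and substituting $c_1(N_u,\tau) = c_{\tau}(u)-\chi(u)$ yields
\[ c_{\tau}(u) = \chi(u)+Q_{\tau}(u)+w_{\tau}(u)-2\delta(u),\]
which is (\ref{equation:adjunctionformula}).

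I expect the main obstacle to be the identification of the end contributions to $u\cdot u_{\nu}$ — the discrepancy between the honest algebraic count and the relative self-intersection number $Q_{\tau}(u)$ — with the asymptotic writhe $w_{\tau}(u)$. This needs the fine asymptotic analysis of holomorphic ends (the leading-order eigenfunction expansion near a nondegenerate Reeb orbit) in order to know that the ends are embedded braids transverse to the pushoff whose linking numbers are controlled through $\tau$, as well as careful bookkeeping of the $2\delta(u)$ term at the non-immersed points via the Micallef--White normal form. Everything else is routine manipulation of relative Chern and Euler classes.
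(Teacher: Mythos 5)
The paper does not prove this lemma; it only cites Hutchings' proof, writing ``For a proof of this statement, see \cite[\S 3]{hutchings2002index}.'' Your sketch correctly reproduces the argument from that source (you cite the parallel account in \cite[\S 3]{hutchings2009embedded}, which is fine): split $u^{*}T\widehat{W}\cong T\dot{\Sigma}\oplus N_u$ to get $c_{\tau}(u)=\chi(u)+c_1(N_u,\tau)$, then compute the self-intersection count $u\cdot u_{\nu}$ both as $c_1(N_u,\tau)+2\delta(u)$ (transverse zeros of $\nu$ plus Micallef--White local contributions at singularities) and as $Q_{\tau}(u)+w_{\tau}(u)$ (definition of $Q_{\tau}$ via $\tau$-trivial admissible representatives together with the asymptotic braid/writhe correction at the punctures), and equate. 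Since the paper defers entirely to the reference, there is no in-text proof to compare against, but your outline matches what that reference does; the one place where readers would need to look carefully is the identification $u\cdot u_{\nu}-Q_{\tau}(u)=w_{\tau}(u)$, which rests on the asymptotic description of the ends as braids and on the sign conventions for writhe and linking, exactly as you flag.
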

For a proof of this statement, see \cite[\S 3]{hutchings2002index}. Following the details in \cite[\S 2.6]{hutchings2009embedded}, we give an overview of the definition writhe, linking number, and winding number in this context, as they will become useful in the proof of Proposition \ref{proposition:onecylinder} below.

Let $\gamma$ be a simple Reeb orbit and let $k$ be a positive integer. A \textit{braid} with $k$ strands around $\gamma$ is an oriented link $\zeta$ contained in a tubular neighborhood $N$ of $\gamma$, such that the tubular neighborhood projection $\zeta \to \gamma$ is an orientation--preserving degree $k$ submersion.

Choose a symplectic trivialization $\tau$ over $\gamma$ and extend it to the tubular neighborhood $N$ of $\gamma$ to identify $N$ with $S^1\times \mathbb{D}$, such that the projection of $\zeta\in N$ to the $S^1$ factor is a submersion. Identify further $S^1\times \mathbb{D}$ with a solid torus in $\mathbb{R}^3$ by applying an orientation preserving diffeomorphism. We thus obtain an embedding $\phi_{\tau}:N\to\mathbb{R}^3$. We set up the identifications in such a way that $\phi_{\tau}(\zeta)$ is an oriented link in $\mathbb{R}^3$ with no vertical tangents. Hence, it has a well defined writhe by counting signed self--crossings in the projection to $\mathbb{R}^2\times\{0\}$. We use the sign convention where counterclockwise twists contribute positively to the writhe.

We define the \textit{writhe} of a braid $\zeta$ around $\gamma$, $w_{\tau}(\zeta)\in\mathbb{Z}$, to be the writhe of the oriented link $\phi_{\tau}(\zeta)$ in $\mathbb{R}^3$. Also if $\zeta$ and $\zeta^{\prime}$ are two disjoint braids around $\gamma$, define the \textit{linking number} of $\zeta$ and $\zeta^{\prime}$, $l_{\tau}(\zeta,\zeta^{\prime})\in\mathbb{Z}$, to be the linking number of the oriented links $\phi_{\tau}(\zeta)$ and $\phi_{\tau}(\zeta^{\prime})$ in $\mathbb{R}^3$. This latter quantity is defined as one half the signed count of crossings of the projections of the two links to $\mathbb{R}^2\times \{0\}$. Note that, if $\zeta$ and $\zeta^{\prime}$ are two disjoint braids around $\gamma$ then 
\[ w_{\tau}(\zeta\cup\zeta^{\prime})=w_{\tau}(\zeta)+w_{\tau}(\zeta^{\prime})+2l_{\tau}(\zeta,\zeta^{\prime}).\]

For a braid $\zeta$ around $\gamma$ that is disjoint from $\gamma$ we define the \textit{winding number} of $\zeta$ around $\gamma$ to be $\mathrm{wind}_{\tau}(\zeta):=l_{\tau}(\zeta,\gamma)$.

The following two lemmas explain how to bound the writhe and the winding number in terms of the Conley--Zehnder index. The formulation is adapted from \cite{hutchings2014cylindrical}. For more details, see also \cite{hutchings2002index}.

\begin{lemma}[{\cite[Lemma 3.2]{hutchings2014cylindrical}}]
\label{lemma:positivewrithe}
Let $\gamma$ be an embedded Reeb orbit and let $N$ be a tubular neighborhood around $\gamma$. Let $u:\dot{\Sigma}\to \mathbb{R}\times Y$ be a $J$--holomorphic with a positive end at $\gamma^d$ which is not part of a trivial cylinder or a multiply covered component and let $\zeta$ denote the intersection of this end with $\{ s\} \times Y$. If $s>>0$, then the following hold:
\begin{enumerate}
\item[a.] $\zeta$ is the graph in $N$ of a nonvanishing section of $\xi_{\gamma^d}$ and has well defined winding number $\mathrm{wind}_{\tau}(\zeta)$.
\item[b.] $\mathrm{wind}_{\tau}(\zeta) \leq \left\lfloor \frac{CZ_{\tau}(\gamma^d)}{2} \right\rfloor$.
\item[c.] If $J$ is generic, $CZ_{\tau}(\gamma^d)$ is odd, and $\mathrm{ind}(u)\leq 2$ then equality holds in (b).
\item[d.] $w_{\tau}(\zeta)\leq (d-1) \mathrm{wind}_{\tau}(\zeta)$.
\end{enumerate}
\end{lemma}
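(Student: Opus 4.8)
The plan is to derive all four statements from the exponential asymptotic analysis of punctured $J$--holomorphic curves near a nondegenerate Reeb orbit (Hofer--Wysocki--Zehnder), together with standard spectral properties of the asymptotic operator and the writhe computations underlying the ECH index in \cite{hutchings2002index}. First I would choose holomorphic coordinates $(s,t)\in[0,\infty)\times\mathbb{R}/\mathbb{Z}$ near the positive puncture, with $\pi_{Y}\circ u(s,\cdot)\to\gamma^{d}$, and write the end of $u$ as a graph $u(s,t)=\exp_{\gamma^{d}(t)}\bigl(\eta(s,t)\bigr)$ for a section $\eta(s,\cdot)$ of $\xi$ along $\gamma^{d}$. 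Since the end of $u$ at $\gamma^{d}$ is neither part of a trivial cylinder nor of a multiply covered component, $\eta$ is not identically zero near the puncture, and the asymptotic theorem yields, for $s$ large,
\[ \eta(s,t)=e^{\lambda s}\bigl(e(t)+r(s,t)\bigr), \]
where $\lambda<0$ is a negative eigenvalue of the asymptotic operator $A_{\gamma^{d}}$, the function $e$ is a corresponding eigenfunction, and $r(s,\cdot)\to0$ in $C^{1}$ as $s\to\infty$. Because eigenfunctions of $A_{\gamma^{d}}$ are nowhere vanishing, for $s\gg0$ the slice $\zeta=u(\{s\}\times S^{1})$ is the graph over $\gamma^{d}$ of the nonvanishing section $\eta(s,\cdot)$, hence a braid around $\gamma$ with a well-defined winding number, and $\mathrm{wind}_{\tau}(\zeta)=\mathrm{wind}_{\tau}(e)$. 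This is part (a).

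For part (b) I would invoke the description of the Conley--Zehnder index through $A_{\gamma^{d}}$: the winding numbers (relative to $\tau$) of eigenfunctions are nondecreasing in the eigenvalue, each winding number occurs on a generalized eigenspace of total dimension $2$, and the largest winding number attained by an eigenfunction with \emph{negative} eigenvalue equals $\lfloor CZ_{\tau}(\gamma^{d})/2\rfloor$ (while the smallest attained with positive eigenvalue equals $\lceil CZ_{\tau}(\gamma^{d})/2\rceil$). Since $\lambda<0$, this gives $\mathrm{wind}_{\tau}(\zeta)=\mathrm{wind}_{\tau}(e)\le\lfloor CZ_{\tau}(\gamma^{d})/2\rfloor$. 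For part (d), note that up to the lower-order term $r$ the braid $\zeta$ is the image of $t\mapsto e^{\lambda s}e(t)$, which is a $d$--stranded braid around $\gamma$ whose strands all spiral with leading winding number $\mathrm{wind}_{\tau}(\zeta)$; the signed count of crossings of such a braid is bounded by $(d-1)\mathrm{wind}_{\tau}(\zeta)$, with the inequality absorbing the contribution of $r$ and of the self-crossings of each individual strand. I would simply quote this estimate from the index asymptotics in \cite{hutchings2002index} (see also \cite[\S3]{hutchings2014cylindrical}).

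The delicate point, and the one I expect to be the main obstacle, is part (c): upgrading (b) to an equality under the hypotheses that $CZ_{\tau}(\gamma^{d})$ is odd, $J$ is generic, and $\mathrm{ind}(u)\le2$. I would argue by contradiction. If $\mathrm{wind}_{\tau}(\zeta)<\lfloor CZ_{\tau}(\gamma^{d})/2\rfloor$, then the leading eigenfunction has strictly sub-maximal winding, and feeding the writhe bound of (d) into the adjunction formula (\ref{equation:adjunctionformula}) and comparing with the Fredholm index formula (\ref{equation:Fredholmindex}) — which writes $\mathrm{ind}(u)$ in terms of $\chi(u)$, the relative first Chern class and the total Conley--Zehnder term — produces a strictly positive defect localized at this puncture. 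The crucial point is that, when $CZ_{\tau}(\gamma^{d})$ is odd, this defect cannot be cancelled elsewhere, so it raises the index past $2$ (genericity entering through the regularity of the somewhere injective curve $u$, which forces the nearby moduli space to have dimension exactly $\mathrm{ind}(u)$), contradicting $\mathrm{ind}(u)\le2$. Making the bookkeeping precise — correctly matching the winding deficiency, the writhe bound and the parity of $CZ_{\tau}(\gamma^{d})$ to a concrete lower bound on $\mathrm{ind}(u)$ — is exactly the part that requires care, and I would carry it out following \cite[Lemma 3.2]{hutchings2014cylindrical} closely.
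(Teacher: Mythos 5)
The paper does not supply a proof of this lemma: it is stated as a direct citation of \cite[Lemma 3.2]{hutchings2014cylindrical}, whose parts (a), (b), (d) in turn rest on the asymptotic analysis and writhe estimates of \cite{hutchings2002index} (ultimately going back to Hofer--Wysocki--Zehnder), and whose part (c) is a genericity argument specific to that paper. So there is no in-paper argument to compare yours against; you are reconstructing the proof from the references that the author is content to cite.

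Your sketch of (a) and (b) is the standard one and is correct: the end is graphical over $\gamma^{d}$ by the exponential decay theorem, the leading eigenfunction $e$ of the asymptotic operator $A_{\gamma^{d}}$ is nowhere vanishing, and the monotone relation between eigenvalues and winding numbers (with the extremal winding of negative eigenvalues being $\lfloor CZ_{\tau}(\gamma^{d})/2\rfloor$) gives (b). For (d), the one-line justification you give is too loose to stand on its own; the actual estimate in \cite{hutchings2002index} requires decomposing the $d$ strands of $\zeta$ according to the (possibly distinct) leading eigenvalues and winding numbers of the different branches, and carefully bounding crossings between strands with different winding. You correctly flag that you would quote this, which is the right move, but as written the "absorbing the contribution of $r$" phrase glosses over the only nontrivial content. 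For (c), your contradiction scheme (winding deficiency $\Rightarrow$ index surplus under genericity, using parity of $CZ_{\tau}(\gamma^{d})$) is in the right spirit, but the mechanism is cleaner than "feeding into adjunction": what one actually uses is that the asymptotic constraint of having sub-maximal winding at the puncture is a condition of even codimension $\ge 2$ in the (transversely cut out, because $J$ is generic and $u$ is somewhere injective) moduli space, while oddness of $CZ_{\tau}(\gamma^{d})$ prevents the borderline case, contradicting $\mathrm{ind}(u)\le 2$. Again you defer to the source for the bookkeeping, which is appropriate since the lemma is cited rather than proved here; but to make the note self-contained you would need to carry that count through rather than gesture at it.
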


An equivalent statement holds for the asymptotic winding number and writhe at a negative cylindrical end of a $J$--holomorphic curve.

\begin{lemma}[{\cite[Lemma 3.4]{hutchings2014cylindrical}}]
\label{lemma:negativewrithe}
Let $\gamma$ be an embedded Reeb orbit and let $N$ be a tubular neighborhood around $\gamma$. Let $u:\dot{\Sigma}\to \mathbb{R}\times Y$ be a $J$--holomorphic with a negative end at $\gamma^d$ which is not part of a trivial cylinder or a multiply covered component and let $\zeta$ denote the intersection of this end with $\{ s\} \times Y$. If $s<<0$, then the following hold:
\begin{enumerate}
\item[a.] $\zeta$ is the graph in $N$ of a nonvanishing section of $\xi_{\gamma^d}$ and has well defined winding number $\mathrm{wind}_{\tau}(\zeta)$.
\item[b.] $\mathrm{wind}_{\tau}(\zeta) \geq \left\lceil \frac{CZ_{\tau}(\gamma^d)}{2} \right\rceil$.
\item[c.] If $J$ is generic, $CZ_{\tau}(\gamma^d)$ is odd, and $\mathrm{ind}(u)\leq 2$ then equality holds in (b).
\item[d.] $w_{\tau}(\zeta)\geq (d-1) \mathrm{wind}_{\tau}(\zeta)$.
\end{enumerate}
\end{lemma}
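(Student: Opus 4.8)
Since Lemma~\ref{lemma:negativewrithe} is the negative--end mirror of Lemma~\ref{lemma:positivewrithe} --- recorded as \cite[Lemma~3.4]{hutchings2014cylindrical} --- the plan is to indicate how its proof runs in parallel with the positive--end case, the only substantive change being that the roles of $s\to+\infty$ and $s\to-\infty$, hence of negative and positive eigenvalues of the asymptotic operator and of $\lfloor\,\cdot\,\rfloor$ and $\lceil\,\cdot\,\rceil$, are interchanged. Fix a trivialization $\tau$ of $\xi$ over $\gamma$ as in the statement.

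First I would invoke the asymptotic analysis of Hofer--Wysocki--Zehnder near the negative puncture. Since $u$ is not a trivial cylinder over $\gamma^{d}$ or a multiply covered component, in suitable holomorphic coordinates $(s,t)\in(-\infty,0]\times\mathbb{R}/T\mathbb{Z}$ on a neighborhood of the puncture $u$ is the graph over $\mathbb{R}\times\gamma^{d}$ of a section $s\mapsto e^{\lambda s}\bigl(e(t)+r(s,t)\bigr)$ of $\xi_{\gamma^{d}}$, where $\lambda>0$ is an eigenvalue of the asymptotic operator $A_{\gamma^{d}}$ (necessarily positive, so that the section decays as $s\to-\infty$), $e$ is a corresponding nonvanishing eigenfunction, and $r(s,t)\to 0$ as $s\to-\infty$. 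For $s\ll 0$ the slice $\zeta=u(\{s\}\times S^{1})$ is then the graph of a nonvanishing section $C^{1}$--close to $e$, which gives~(a), with $\mathrm{wind}_{\tau}(\zeta)=\mathrm{wind}_{\tau}(e)$. Statement~(b) follows from the spectral theory of $A_{\gamma^{d}}$: the winding number of an eigenfunction is nondecreasing in its eigenvalue, and the smallest winding among eigenfunctions of positive eigenvalue equals $\lceil CZ_{\tau}(\gamma^{d})/2\rceil$; since $\lambda>0$, this forces $\mathrm{wind}_{\tau}(\zeta)\ge\lceil CZ_{\tau}(\gamma^{d})/2\rceil$.

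For~(c), the same transversality argument that proves Lemma~\ref{lemma:positivewrithe}(c) applies with the inequalities reversed: if $\mathrm{wind}_{\tau}(\zeta)$ were strictly larger than $\lceil CZ_{\tau}(\gamma^{d})/2\rceil$, then $u$ would lie in the subset of the moduli space cut out by the vanishing of the leading coefficients of its asymptotic expansion, which --- because $CZ_{\tau}(\gamma^{d})$ is odd --- has codimension at least $2$; together with $\mathrm{ind}(u)\le 2$ and the $\mathbb{R}$--action on the symplectization this forces that subset to have dimension below $0$, contradicting that for generic $J$ the curve $u$ lies in a manifold of the expected dimension. Hence equality holds in~(b). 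For~(d), I would homotope $\zeta$ through $d$--stranded braids around $\gamma$, which changes neither $w_{\tau}$ nor $\mathrm{wind}_{\tau}$, to the model braid given by the graph of $e$, and count signed self--crossings: because $\lambda>0$, the unavoidable crossings all carry the sign that makes them contribute positively to the bound, their number in the model equals $(d-1)\,\mathrm{wind}_{\tau}(\zeta)$, and any additional crossings only increase $w_{\tau}$, so $w_{\tau}(\zeta)\ge(d-1)\,\mathrm{wind}_{\tau}(\zeta)$.

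The parts I expect to be the real work are~(c) and the crossing count in~(d): both depend on the fine structure of the asymptotic expansion, and~(d) additionally on the combinatorics of braids near a $d$--fold covered Reeb orbit, exactly as developed in \cite[\S3]{hutchings2014cylindrical} and \cite[\S3]{hutchings2002index}, whereas~(a) and~(b) are immediate from the asymptotic formula and the spectrum of $A_{\gamma^{d}}$. For this reason it is cleanest in the paper to cite \cite[Lemma~3.4]{hutchings2014cylindrical} directly, the above being an outline; as an independent check one can also deduce Lemma~\ref{lemma:negativewrithe} from Lemma~\ref{lemma:positivewrithe} by precomposing $u$ with $(s,y)\mapsto(-s,y)$, which turns the negative end into a positive end for the reversed contact form $-\lambda$ and negates $CZ_{\tau}$ (so that $\lfloor\,\cdot\,\rfloor$ becomes $\lceil\,\cdot\,\rceil$), the only delicacy being to keep track of the signs of $\mathrm{wind}_{\tau}$ and $w_{\tau}$ under the reversal of orientation of the base orbit.
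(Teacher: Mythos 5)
The paper gives no proof of this lemma — it is quoted verbatim from \cite[Lemma 3.4]{hutchings2014cylindrical}, with \cite{hutchings2002index} cited for details — and your outline faithfully reproduces the standard argument from those sources: Hofer--Wysocki--Zehnder asymptotics for (a), the spectral monotonicity of winding numbers and the identity $\alpha^+_\tau = \lceil CZ_\tau/2\rceil$ for (b), codimension counting for (c), and the braid crossing count for (d). Your remark about deducing the negative-end case from the positive-end one by a sign-reversing change of ends is a correct and useful sanity check, and you are right that the sign bookkeeping there is the only subtle point.
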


Fix a symplectic embedding $\varphi \in \sy(X_{\Omega_1},X_{\Omega_2})$ and fix an almost complex structure $J\in\mathcal{J}(\widehat{W}_{\varphi})$. 

\begin{proposition} 
\label{proposition:onecylinder}
If the moduli space $\mathcal{M}_J(\varphi)$  is nonempty, then it contains exactly one index zero cylinder.
\end{proposition}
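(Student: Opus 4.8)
The plan is to prove uniqueness by a combination of intersection positivity and asymptotic analysis, since any two cylinders in $\mathcal{M}_J(\varphi)$ have the same positive end at $\gamma_c$ and the same negative end at $\gamma_a$, both elliptic and embedded. First I would observe, as already noted in the excerpt, that each $u\in\mathcal{M}_J(\varphi)$ is somewhere injective: if it were multiply covered, its underlying simple curve would be a cylinder of negative Fredholm index from $\gamma_c$ to $\gamma_a$, contradicting automatic transversality (Lemma \ref{lemma:automatictransversality}), which forces $\mathrm{ind}\geq 0$ for all such curves. Combined with the adjunction formula (\ref{equation:adjunctionformula}) and the writhe bounds at elliptic ends, one gets $\delta(u)=0$, so $u$ is embedded; this is what is needed to invoke automatic transversality in the first place, and it also sets up the intersection-theoretic comparison of two cylinders.

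Next, suppose $u_1,u_2\in\mathcal{M}_J(\varphi)$ are distinct. I would compute the algebraic intersection number $u_1\cdot u_2$ using the relative intersection pairing $Q_\tau$ together with the asymptotic linking contributions at $\gamma_c$ and $\gamma_a$; for two embedded index-zero cylinders with matching ends at elliptic orbits this "virtual" intersection number should be zero, by the same index/writhe arithmetic that gave $\delta(u_i)=0$. On the other hand, since $J$ is an almost complex structure, any genuine intersection point of two distinct $J$-holomorphic curves contributes strictly positively (positivity of intersections in dimension four), and each such point persists or is accounted for in the relative count. Therefore $u_1$ and $u_2$ must be disjoint in the interior, and their asymptotic braids at $\gamma_c$ (resp.\ $\gamma_a$) must be unlinked, i.e.\ have linking number zero. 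At this stage the two cylinders are disjoint embedded curves with identical asymptotics.

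The key step is then the asymptotic analysis near the ends. Using Lemma \ref{lemma:positivewrithe}(a) and Lemma \ref{lemma:negativewrithe}(a), each $u_i$ near $\gamma_c$ is the graph over the orbit of a nonvanishing section of $\xi$, and the difference $u_1 - u_2$, restricted to a neighborhood of the puncture, satisfies a linear asymptotic equation whose leading behavior is governed by an eigenfunction of the associated asymptotic operator. Since the linking number of the two braids is zero, the winding number of this eigenfunction is zero; but the winding numbers available to nonzero eigenfunctions at an elliptic orbit with this Conley--Zehnder index are constrained, and a winding-zero leading term, together with the analogous constraint at $\gamma_a$, is incompatible with $u_1\neq u_2$ unless $u_1-u_2\equiv 0$ near both ends. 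A unique continuation argument (the difference is a solution of a Cauchy--Riemann-type equation that vanishes to infinite order, hence vanishes identically) then yields $u_1=u_2$.

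I expect the main obstacle to be the precise bookkeeping in the intersection/adjunction computation that shows the relative intersection number of two such cylinders is exactly zero: one must pin down $Q_\tau$, the relative Chern class, and the writhe/linking terms at both elliptic ends with the correct trivialization $\tau$, and verify that no hidden asymptotic intersections are being dropped. Once that vanishing is established, positivity of intersections forces disjointness and the asymptotic winding estimates of Lemmas \ref{lemma:positivewrithe} and \ref{lemma:negativewrithe} do the rest; the uniqueness statement, namely that the single cylinder is the index-zero one, is then immediate since every cylinder in $\mathcal{M}_J(\varphi)$ has $\mathrm{ind}=0$ by the index formula (\ref{equation:Fredholmindex}).
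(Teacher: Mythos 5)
Your proposal identifies the right ingredients (adjunction, positivity of intersections, the winding bounds of Lemmas \ref{lemma:positivewrithe} and \ref{lemma:negativewrithe}), but the central claim on which everything afterwards rests is wrong: for two distinct cylinders $u_1,u_2\in\mathcal{M}_J(\varphi)$ the intersection number coming from $Q_\tau$ plus asymptotic linking is \emph{not} zero -- it is forced to be negative, and that is already the contradiction. Concretely, with $Q_\tau(u_1,u_2)=0$, the winding bounds give $\mathrm{wind}_\tau(\zeta^c_i)\leq\lfloor CZ_\tau(\gamma_c)/2\rfloor=0$ at the positive end, but $\mathrm{wind}_\tau(\zeta^a_i)\geq\lceil CZ_\tau(\gamma_a)/2\rceil=1$ at the negative end. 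The ends at $\gamma_a$ are therefore necessarily linked with linking number at least $1$, not zero. Consequently the writhe of $u_1\cup u_2$ satisfies $w_\tau(u_1\cup u_2)=w_\tau(\zeta_c)-w_\tau(\zeta_a)\leq 0-2=-2$, and the relative adjunction formula (\ref{equation:adjunctionformula}) applied to $u_1\cup u_2$ gives $0=0+0+w_\tau(u_1\cup u_2)-2\delta(u_1\cup u_2)\leq -2$. This is the contradiction the paper uses, and it occurs immediately; there is no stage at which $u_1$ and $u_2$ are shown to be disjoint with unlinked ends.

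Because the ``intersection number equals zero'' premise fails, the later stages of your plan (disjointness in the interior, unlinked asymptotic braids, winding-zero leading eigenfunction, unique continuation) do not get off the ground. It is also worth flagging that the unique continuation step, even in a situation where it applied, is not formulated cleanly: $u_1$ and $u_2$ are two genuinely distinct curves, not a small perturbation of one another, so ``the difference $u_1-u_2$ satisfies a Cauchy--Riemann equation'' requires some justification and is not needed here. The correct assembly is: either compute $w_\tau(u_1\cup u_2)$ directly from the winding/linking bounds and feed it into adjunction (as the paper does), or equivalently observe that the signed count of intersections implied by $Q_\tau=0$ and the linking at the ends would have to be negative, contradicting positivity of intersections. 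Either way the contradiction is immediate and no disjointness or unique continuation argument is required.
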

\begin{proof}
 Assume there are two different cylinders, $u_1$ and $u_2$, in $\mathcal{M}_J(\varphi)$. For $s<<0$, $\zeta_a=(u_1\cup u_2)\cap (\{s\}\times \partial X_{\Omega_1})$ is a braid around $\gamma_a$ with two components, $\zeta^{a}_1$ and $\zeta^{a}_2$, each having one strand. For $s>>0$, $\zeta_c=(u_1\cup u_2)\cap (\{s\}\times \partial X_{\Omega_2})$ is a braid around $\gamma_c$ with two components, $\zeta^{c}_1$ and $\zeta^{c}_2$, each with one strand.
Lemma \ref{lemma:positivewrithe} implies
\[ \mathrm{wind}_{\tau}(\zeta^{c}_i) \leq \left\lfloor \frac{CZ_{\tau}(\gamma_c)}{2} \right\rfloor = \left\lfloor \frac{1}{2} \right\rfloor = 0.\]
Similarly, Lemma \ref{lemma:negativewrithe} implies
\[ \mathrm{wind}_{\tau}(\zeta^{a}_i) \geq \left\lceil \frac{CZ_{\tau}(\gamma_a)}{2} \right\rceil = \left\lceil \frac{1}{2} \right\rceil = 1.\]
The linking numbers of the different strands of the two braids are given by $l_{\tau}(\zeta^a_1,\zeta^a_2)  = \mathrm{wind}(\zeta^a_2)$  and  $l_{\tau}(\zeta^c_1,\zeta^c_2) = \mathrm{wind}(\zeta^c_2)$. See \cite[Lemma 4.17]{hutchings2009embedded} for details.
This means 
\begin{align*}
\hspace{1.5cm} w_{\tau}(\zeta_a) & = w_{\tau}(\zeta^a_1\cup\zeta^a_2) = w_{\tau}(\zeta^a_1) + w_{\tau}(\zeta^a_2) + 2\cdot l_{\tau}(\zeta^a_1,\zeta^a_2)  \\
& = 0 + 0 + 2 \cdot \mathrm{wind}(\zeta^a_2) \geq 2 
\end{align*}
and \begin{align*}
\hspace{1.5cm} w_{\tau}(\zeta_c) & = w_{\tau}(\zeta^c_1\cup\zeta^c_2) = w_{\tau}(\zeta^c_1) + w_{\tau}(\zeta^c_2) + 2\cdot l_{\tau}(\zeta^c_1,\zeta^c_2)  \\
& = 0 + 0 + 2 \cdot \mathrm{wind}(\zeta^c_2) \leq 0. 
\end{align*}
Hence
\[ w_{\tau}(u_1\cup u_2) = w_{\tau}(\zeta_c)-w_{\tau}(\zeta_a)\leq -2.\]
Since $c_{\tau}(u_1\cup u_2)=Q_{\tau}(u_1\cup u_2) = 0$, the relative adjunction formula recalled in (\ref{equation:adjunctionformula}) applied to $u_1\cup u_2$ gives
\[ 0 = 0+0+w_{\tau}(u_1\cup u_2)-2\delta(u_1\cup u_2).\]
This is a contradiction since $w_{\tau}(u_1\cup u_2)\leq -2$ and $\delta(u_1\cup u_2)\geq 0$. 
\end{proof}

\subsection{Final steps of the proof}
\label{subsection:proof}
We have all the details needed to complete the proof of Theorem \ref{theorem:maintheorem}. Assume that the loop $\{\varphi_t\}_{t\in[0,1]}$ is contractible in $\sy(X_{\Omega_1},X_{\Omega_2})$. This means there exists a $2$--parameter family $\{ \varphi_z \}_{z\in\mathbb{D}}\subset \sy(X_{\Omega_1},X_{\Omega_2})$, parametrized by the unit disk $\mathbb{D}$, such that $\{ \varphi_z \}_{z\in\partial \mathbb{D}} = \{ \varphi_t \}_{t\in[0,1]}$. The family of embeddings $\{\varphi_z\}_{z\in\mathbb{D}}$ generates a $2$--parameter family of completed symplectic cobordisms $\{\widehat{W}_{\varphi_z}\}_{z\in\mathbb{D}}$. Let $\mathfrak{J}=\{ J_z\}_{z\in\mathbb{D}}$  be a generic $2$--parameter family of compatible almost complex structures such that $J_z\in\mathcal{J}(\widehat{W}_{\varphi_z})$ for every $z\in\mathbb{D}$ and $J_z=\widehat{J}$ for every $z\in\partial\mathbb{D}$, where $\widehat{J}$ is the almost complex structure constructed in Proposition \ref{proposition:nonempty}. Remark \ref{remark:sameimage} provides an explanation as to why we can choose the same almost complex structure $\widehat{J}$ for all $z\in\partial\mathbb{D}$.

Consider the moduli space
\[\mathcal{M}_{\mathfrak{J}}:=\left\lbrace (z,u_z) \; \middle| \; z\in\mathbb{D}, \; u_z\in\mathcal{M}_{J_z} (\varphi_z) \right\rbrace. \]

\begin{claim} $\mathcal{M}_{\mathfrak{J}}$ is homeomorphic to the closed disk $\mathbb{D}$.
\end{claim}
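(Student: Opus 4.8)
The plan is to exhibit $\mathcal{M}_{\mathfrak{J}}$ as a compact $2$--manifold whose only boundary component is a copy of $\partial\mathbb{D}$, and then invoke the classification of surfaces. First I would establish that $\mathcal{M}_{\mathfrak{J}}$ is a $2$--dimensional manifold. Each cylinder $u_z\in\mathcal{M}_{J_z}(\varphi_z)$ has Fredholm index $0$, so the parametric moduli space has expected dimension $\dim\mathbb{D}=2$; for a generic $2$--parameter family $\mathfrak{J}$, parametric transversality (using that the cylinders are somewhere--injective — indeed embedded by the adjunction formula (\ref{equation:adjunctionformula}) — and have ends only at the elliptic orbits $\gamma_a,\gamma_c$) guarantees that the universal moduli space is cut out transversely and hence is a smooth $2$--manifold, with boundary exactly over $z\in\partial\mathbb{D}$. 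Over the boundary circle $\partial\mathbb{D}$, Remark \ref{remark:sameimage} says $\widehat{W}_{\varphi_z}=\widehat{W}_{\varphi_0}$ and $J_z=\widehat{J}$, so $\mathcal{M}_{J_z}(\varphi_z)=\mathcal{M}_{\widehat{J}}(\varphi_0)$ is independent of $z$; by Proposition \ref{proposition:nonempty} it is nonempty and by Proposition \ref{proposition:onecylinder} it consists of a single cylinder. Hence $\partial\mathcal{M}_{\mathfrak{J}}$ is (at least) one embedded circle mapping homeomorphically to $\partial\mathbb{D}$ under $(z,u_z)\mapsto z$.

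Next I would prove compactness of $\mathcal{M}_{\mathfrak{J}}$. A sequence $(z_i,u_{z_i})$ has, after passing to a subsequence, $z_i\to z_\infty\in\mathbb{D}$, and since the embeddings $\varphi_{z_i}$ converge in $C^0$ to $\varphi_{z_\infty}$ and $J_{z_i}\to J_{z_\infty}$, SFT compactness \cite[Theorem 10.2]{bourgeois2003compactness} applies: the $u_{z_i}$ converge to a $J_{z_\infty}$--holomorphic building. By Proposition \ref{proposition:nobadbreaking} — whose hypotheses are exactly the bounds in Theorem \ref{theorem:maintheorem} — this building has only one level, so the limit is an honest cylinder $u_{z_\infty}\in\mathcal{M}_{J_{z_\infty}}(\varphi_{z_\infty})$, and the convergence shows $(z_i,u_{z_i})\to(z_\infty,u_{z_\infty})$ in $\mathcal{M}_{\mathfrak{J}}$. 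Thus $\mathcal{M}_{\mathfrak{J}}$ is a compact $2$--manifold with boundary.

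To conclude, I would argue the projection $\pi:\mathcal{M}_{\mathfrak{J}}\to\mathbb{D}$, $(z,u_z)\mapsto z$, is proper with discrete fibers (by compactness and dimension count each fiber $\mathcal{M}_{J_z}(\varphi_z)$ is a finite set, by the discussion following Proposition \ref{proposition:nobadbreaking}), hence a branched/finite covering of the interior away from nothing — in fact, since the interior fibers are finite and the map is a local homeomorphism by transversality, $\pi$ restricted to the interior is a covering map of $\mathring{\mathbb{D}}$, which is simply connected, so each component of $\mathcal{M}_{\mathfrak{J}}$ maps homeomorphically onto its image. Because $\mathbb{D}$ is connected and $\pi$ surjects (it hits $\partial\mathbb{D}$, and by openness/closedness of the image it is all of $\mathbb{D}$), and the boundary consists of a single circle over $\partial\mathbb{D}$, the classification of compact surfaces forces $\mathcal{M}_{\mathfrak{J}}$ to be connected with one boundary circle; a compact connected surface with a single boundary component admitting a degree--one proper map to $\mathbb{D}$ restricting to a homeomorphism on the boundary must be $\mathbb{D}$ itself (one may also see this directly: $\pi$ is a proper local homeomorphism over the simply connected $\mathring{\mathbb{D}}$, hence a homeomorphism on the interior, and it extends to a homeomorphism of the closures since the boundary behaviour is controlled). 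I expect the main obstacle to be the careful bookkeeping at the boundary $\partial\mathbb{D}$: one must ensure that $\mathcal{M}_{\mathfrak{J}}$ is a manifold \emph{with boundary} there (not, say, with corners or with the boundary circle appearing as an interior fold), which uses both the transversality of $\widehat{J}$ via automatic transversality in Lemma \ref{lemma:automatictransversality} and the uniqueness in Proposition \ref{proposition:onecylinder} to pin down the collar structure; the interior manifold and compactness statements are then comparatively routine given Propositions \ref{proposition:nobadbreaking} and \ref{proposition:onecylinder}.
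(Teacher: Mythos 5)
Your proposal is correct and follows essentially the same route as the paper: parametric regularity for the interior, automatic transversality and the nonemptiness/uniqueness propositions over $\partial\mathbb{D}$, SFT compactness plus Proposition \ref{proposition:nobadbreaking} for compactness, and then the observation that the projection to $\mathbb{D}$ is a proper local homeomorphism with at most one point per fiber. Your added remarks — that one must check $\partial\mathcal{M}_{\mathfrak{J}}$ is honest manifold boundary (no corners or folds), and that one should spell out the proper-local-homeo/covering-of-$\mathring{\mathbb{D}}$ step rather than invoke compactness loosely — are exactly the points the paper leaves implicit, and they are handled by automatic transversality as you say.
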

\begin{proof}
By the parametric regularity theorem, \cite[Theorem. 7.2 \& Remark 7.4]{wendl2016lectures}, for a generic choice of $2$--parameter family of compatible complex structures $\mathfrak{J}$, the moduli space $\mathcal{M}_{\mathfrak{J}}$ is a $2$--dimensional manifold that is cut out transversely. The holomorphic curves in $\mathcal{M}_{\mathfrak{J}}$ have fixed asymptotics and so, by the SFT compactness result presented in \cite[Theorem 10.2]{bourgeois2003compactness}, there exists a compactification of $\mathcal{M}_{\mathfrak{J}}$ with broken holomorphic buildings. Proposition \ref{proposition:nobadbreaking} implies that, under the assumptions made in the hypothesis of Theorem \ref{theorem:maintheorem}, no such breaking is possible and so, $\mathcal{M}_{\mathfrak{J}}$ is already compact.

The automatic transversality result presented in Lemma \ref{lemma:automatictransversality}, together with the nonemptiness result proved in Proposition \ref{proposition:nonempty} and the uniqueness result proved in Proposition \ref{proposition:onecylinder}, implies that $\mathcal{M}_{\mathfrak{J}}$ contains exactly one cylinder above each parameter $z\in\partial\mathbb{D}$ and at most one cylinder above each parameter $z\in \mathrm{int}\mathbb{D}$. Given that the moduli space $\mathcal{M}_{\mathfrak{J}}$ is compact, it must contain exactly one cylinder above every parameter $z\in\mathbb{D}$ and so we can conclude that $\mathcal{M}_{\mathfrak{J}}$ is homeomorphic to the disk $\mathbb{D}$.
\end{proof}

Let $\gamma_c:\mathbb{R}/c\mathbb{Z}\to \partial X_{\Omega_2}$ be the parametrization of $\gamma_c$ such that $p=\gamma_c(0)=\left(\sqrt{\frac{c}{\pi}},0\right)\in\mathbb{C}^2$.  There exists a unique representative $u_z:\mathbb{R}\times S^1 \to \widehat{W}_{\varphi_z}$ of the unique class in $\mathcal{M}_{J_z} (\varphi_z)$ such that $\lim_{s\to\infty} u_z(s,0) = p$. Define $p_z:=\lim_{s\to-\infty} u_z(s,0)$. 
This construction induces a well defined composition of maps
\[
\begin{array}{ccccccc}
S^1 & \to & \sy(X_{\Omega_1},X_{\Omega_2})  & \to & \mathcal{M}_{\mathfrak{J}} & \to & \gamma_a \simeq S^1 \\
t & \mapsto & \varphi_t=\varphi_z & \mapsto & (z,[u_z]) & \mapsto & p_z.
\end{array}
\]
\begin{claim}
The above composition is a degree $-1$ circle map.
\end{claim}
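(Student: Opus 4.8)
The plan is to compute the degree of the circle map by analyzing how the distinguished cylinder behaves under the explicit loop $\{\varphi_t\}_{t\in[0,1]}$, using the $\widehat{J}$--holomorphic cylinder $[u]$ with image $\widehat{S}$ constructed in Proposition \ref{proposition:nonempty}. Since $J_z = \widehat{J}$ for all $z\in\partial\mathbb{D}$, and by Remark \ref{remark:sameimage} the cobordisms $\widehat{W}_{\varphi_t}$ all coincide with $\widehat{W}_{\varphi_0}$, the restriction of the composition to $S^1 = \partial\mathbb{D}$ only sees the single moduli space $\mathcal{M}_{\widehat{J}}(\varphi_0)$, which by Proposition \ref{proposition:onecylinder} contains the unique cylinder $[u]$ with image $\widehat{S} = (-\infty,0]\times\gamma_a \cup S \cup [0,\infty)\times\gamma_c$. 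So the degree computation reduces to the following concrete question: if we fix the parametrized representative $u_t : \mathbb{R}\times S^1 \to \widehat{W}_{\varphi_0}$ of this class normalized by $\lim_{s\to\infty} u_t(s,0) = p = (\sqrt{c/\pi},0)$, and we track $p_t := \lim_{s\to-\infty} u_t(s,0) \in \gamma_a$ as $t$ runs over $[0,1]$, what is the winding number of $t\mapsto p_t$ around $\gamma_a$?

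First I would make the identification of the ``vertical ray'' on the cylinder completely explicit. The image $\widehat{S}$ is the slice $\{z_1 = 0\}$ of the completed cobordism, which is rotationally symmetric in the $z_2$--coordinate; parametrizing $\widehat{S}$ as a cylinder, the $S^1$--coordinate is (up to a fixed reparametrization independent of $t$) the angular coordinate $\theta_2$ in the $z_2$--plane. The positive asymptotic end limits onto $\gamma_c = \partial X_{\Omega_2}\cap\{z_1=0\}$ and the negative end onto $\gamma_a = \partial X_{\Omega_1}\cap\{z_1=0\}$. The key point is that the loop $\{\varphi_t\}$ acts on this picture: $\varphi_t$ is the restriction of $\Phi_t$, which for $t\in[0,1/2]$ rotates the $z_1$--plane and for $t\in[1/2,1]$ rotates the $z_2$--plane by $e^{-4\pi i t}$. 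The cobordism $\widehat{W}_{\varphi_t}$ is $X_{\Omega_2}$ minus the \emph{rotated} copy $\varphi_t(X_{\Omega_1})$; but since $\varphi_t$ acts trivially on $\{z_1=0\}$ for $t\in[0,1/2]$ and by an honest rotation there for $t\in[1/2,1]$, the negative asymptotic orbit $\gamma_a$ — which is the boundary circle of $\varphi_t(X_{\Omega_1})\cap\{z_1=0\}$ — is a fixed circle in $\mathbb{C}^2$, but the \emph{marked point} $p_z$ traced by the ray asymptotic to the fixed point $p$ at $+\infty$ gets dragged around by the monodromy of the family. Concretely, for $t\in[1/2,1]$ the cobordism near $\partial X_{\Omega_1}$ is being rotated by $e^{-4\pi i t}$ in the $z_2$--plane, so the distinguished point on $\gamma_a$ where the ray lands rotates by $e^{-4\pi i t}$ as well; over $t\in[1/2,1]$ this is a full clockwise rotation, i.e. winding number $-1$. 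For $t\in[0,1/2]$ the $z_2$--plane is untouched, so $p_t$ is constant there. Hence the total winding number of $t\mapsto p_t$ around $\gamma_a$ is $-1$.

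Carrying this out rigorously requires a little care about two things, and I expect the $z_1$--rotation half of the loop to be the main obstacle. The first issue is that during $t\in[0,1/2]$ the $z_1$--plane rotates, which changes $\varphi_t(X_{\Omega_1})$ and hence the cobordism $\widehat{W}_{\varphi_t}$ as a subset of $\mathbb{C}^2$ — but it does so by a symplectomorphism fixing $\{z_1=0\}$ pointwise, so the image $\widehat{S}$ of the distinguished cylinder is unchanged and, crucially, the asymptotic marked point $p_t\in\gamma_a$ does not move: I would verify this by noting that the normalization $\lim_{s\to\infty}u_t(s,0)=p$ pins down the cylinder's domain parametrization, and since both $\gamma_c$, $\gamma_a$, $p$, and the whole surface $\widehat{S}$ are fixed for $t\in[0,1/2]$, so is $p_t$. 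The second, more delicate issue is consistency of the trivialization/parametrization of $\widehat{S}$ as one passes $t=1/2$ and returns to $t=1$: one must check that the identification of $\mathcal{M}_{\widehat{J}}(\varphi_1)$ with $\mathcal{M}_{\widehat{J}}(\varphi_0)$ (both equal $\mathcal{M}_{\widehat{J}}(\varphi_0)$ by Remark \ref{remark:sameimage}) is the identity, so that the loop genuinely closes up and the winding number is well defined. Once these points are settled, the computation above gives that $t\mapsto p_z$ has degree $-1$, completing the contradiction with the Claim that $\mathcal{M}_{\mathfrak{J}}\cong\mathbb{D}$: a degree $\pm1$ map $S^1\to\gamma_a$ that factors through the contractible space $\mathcal{M}_{\mathfrak{J}}\simeq\mathbb{D}$ is impossible, since $S^1\to\mathcal{M}_{\mathfrak{J}}$ would be null-homotopic and hence so would its composition to $\gamma_a$.
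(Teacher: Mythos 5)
Your strategy is the same as the paper's: restrict attention to $\partial\mathbb{D}$, where by Remark \ref{remark:sameimage} the cobordism, almost complex structure, cylinder, and positive-end marked point are all fixed, so that $p_t$ is obtained by pulling a fixed point of $\widehat{S}$ back through $\varphi_t^{-1}$, and one reads the winding number directly off the formula (\ref{equation:loop}). However, your bookkeeping contains two sign errors that happen to cancel. First, you place $\widehat{S}$ in $\{z_1=0\}$, following the line ``$S:=W_{\varphi_0}\cap\{z_1=0\}$'' in the proof of Proposition \ref{proposition:nonempty}; but that line is a typo for $\{z_2=0\}$: both $\gamma_a=e_{0,1}=\partial X_{\Omega_1}\cap\{z_2=0\}$ and $\gamma_c=e_{0,1}=\partial X_{\Omega_2}\cap\{z_2=0\}$ lie in the $z_1$--plane (with actions $a$ and $c$, not $f_1(0)$ and $f_2(0)$), and the rest of that proof (the computation at $p=(z_1,0)$, $\widehat{J}$ invariant under $z_2$-rotation) only makes sense with $S\subset\{z_2=0\}$. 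So the rotation of $p_t$ occurs during the $z_1$-half of the loop, $t\in[0,\tfrac12]$, not the $z_2$-half. Second, you assert that rotating the cobordism by $e^{-4\pi i t}$ makes $p_t$ rotate by $e^{-4\pi i t}$ as well; but the asymptotic point $\tilde q$ of the fixed cylinder sits at a fixed location in $\mathbb{C}^2$ on the set $\varphi_t(\gamma_a)$, and the marked point on $\gamma_a$ is its preimage $p_t=\varphi_t^{-1}(\tilde q)$, which rotates by the \emph{inverse}. With the correct plane and the correct sense one gets $p_{e^{2\pi i t}}=e^{-4\pi i t}q$ for $t\in[0,\tfrac12]$ and $p_{e^{2\pi i t}}=q$ for $t\in(\tfrac12,1]$, hence degree $-1$, which is what you reached only because your two errors cancelled. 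As you correctly note, the sign of the degree is immaterial to the final contradiction, so the argument still closes; but the intermediate claims about which half of the loop moves $p_t$ and in which direction should be corrected. Your remark about checking that the identification of $\mathcal{M}_{\widehat{J}}(\varphi_1)$ with $\mathcal{M}_{\widehat{J}}(\varphi_0)$ is the identity is handled trivially by $\Phi_0=\Phi_1=\mathrm{id}$, so $\varphi_0=\varphi_1$ literally.
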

\begin{proof}
Remark \ref{remark:sameimage} explains why for any two parameters $z,w\in\partial \mathbb{D}$, the moduli spaces  $\mathcal{M}_{J_z} (\varphi_z)$ and $\mathcal{M}_{J_w} (\varphi_w)$ are the same. Moreover, note that the choice of fixed asymptotics, $\lim_{s\to\infty} u_z(s,0) = p = \lim_{s\to\infty} u_w(s,0) $, implies that the representatives $u_z$ and $u_w$ are also the same. Hence, we can easily trace the movement of the point $p_z$ on the orbit $\gamma_a$ as $z$ goes around the boundary of the parameter space. 

Recall that the image of $X_{\Omega_1}$ under the loop of symplectic $\{ \varphi_t\}_{t\in[0,1]}$ does a counterclockwise $2\pi$ rotation in the $z_1$--plane, which rotates the orbit $\gamma_a$, followed by a clockwise $2\pi$ rotation in the $z_2$--plane, which does not rotate the orbit $\gamma_a$. Let $q:=p_{1}$ be the point on $\gamma_a$ corresponding to the parameter $1\in\mathbb{D}$. Then 
\[p_{e^{2\pi i t}} =
\begin{cases} e^{-4\pi i t } q, & t \in \left[ 0,\frac{1}{2}\right] \\
q, & t \in \left( \frac{1}{2}, 1 \right],
\end{cases}\]
and so the above composition is a degree $-1$ circle map.
\end{proof}
This last claim provides us with a contradiction, given that a degree $-1$ circle map cannot factor through the disk $\mathcal{M}_{\mathfrak{J}}\simeq \mathbb{D}$.

\printbibliography

\end{document}